\theoremstyle{plain}
\newtheorem{theorem}{Theorem}[section]
\newtheorem{proposition}{Proposition}[section]
\theoremstyle{definition}
\newtheorem{definition}{Definition}[section]
\theoremstyle{remark}
\newtheorem*{acknowledgment}{Acknowledgment}
\title{The harmonic theory on a vector bundle with singular Hermitian metrics and positivity}
\author{Jingcao Wu}
\begin{document}
\pagestyle{plain}
\begin{abstract}
Let $E$ be a holomorphic vector bundle endowed with a singular Hermitian metric $H$. In this paper, we develop the harmonic theory on $(E,H)$. Then we extend several canonical results of J. Koll\'{a}r and K. Takegoshi to this situation. In the end, we generalise Nakano's vanishing theorem.
\end{abstract}
\maketitle

\section{Introduction}
This is a continuation of our work \cite{Wu20b} about the singular Hermitian metric on a holomorphic vector bundle. 

Let $E$ be a holomorphic vector bundle of rank $r+1$ over a compact K\"{a}hler manifold $(Y,\omega)$ of dimension $n$. Let $X:=\mathbb{P}(E^{\ast})$ be the projectivsed bundle with the natural projection $\pi:X\rightarrow Y$ and the tautological line bundle $\mathcal{O}_{E}(1):=\mathcal{O}_{X}(1)$. Let $\Omega$ be a K\"{a}hler metric on $X$. We proposed an alternative definition for the singular Hermitian metric $H$ on $E$ in \cite{Wu20b}, and showed that this type of singular metric differs slightly with the one defined in \cite{Rau15}. Moreover, it has good nature to help us to define the Griffiths and Nakano positivities. The goal of this paper is to develop the harmonic theory on $(E,H)$.

We first briefly recall the canonical harmonic theory when $H$ is smooth. In this context, the adjoint operator $\bar{\partial}^{\ast}$ of the $\bar{\partial}$ operator with respect to the $L^{2}$-norm $\|\cdot\|_{H,\omega}$ is defined as
\[
\bar{\partial}^{\ast}:=\ast\partial_{H}\ast,
\]
where $\ast$ is the Hodge $\ast$-operator defined by $\omega$ and $\partial_{H}$ is the $(1,0)$-part of the Chern connection associated with $H$. Then the $\bar{\partial}$-Laplacian operator \cite{GrH78}, defined as 
\[
\Box=\bar{\partial}\bar{\partial}^{\ast}+\bar{\partial}^{\ast}\bar{\partial}:A^{p,q}(Y,E)\rightarrow A^{p,q}(Y,E),
\]
is a self-adjoint elliptic operator. Here $A^{p,q}(Y,E)$ is the collection of all the smooth $E$-valued $(p,q)$-forms on $Y$. Thus, the eigenform of $\Box$ with eigenvalue zero is called a harmonic form, and the harmonic space is defined as
\[
\mathcal{H}^{p,q}(Y,E):=\{\alpha\in A^{p,q}(Y,E);\Box\alpha=0\}.
\]
The celebrated Hodge's theorem states that
\[
\mathcal{H}^{p,q}(Y,E)\simeq H^{p,q}(Y,E),
\]
where $H^{p,q}(Y,E)$ is the Dolbeault cohomology group \cite{GrH78}.

Now $H$ is not necessary to be smooth. The first step is to approximate it with a family of smooth metrics. When $E$ is a line bundle, it has been done in \cite{Dem92,DPS01}. We generalise their work to the higher rank vector bundle as follows:
\begin{proposition}\label{p11}
Suppose that $E$ is equipped with a singular Hermitian metric $H$ such that $i\Theta_{\mathcal{O}_{E}(1),\varphi}\geqslant v$ for some smooth real $(1,1)$-form $v$ on $X$. Here $\varphi$ is the metric on $\mathcal{O}_{E}(1)$ corresponding to $H$. Moreover, assume that there exits a section $\xi$ of some multiple $\mathcal{O}_{E}(k)$ such that $\sup_{X}|\xi|_{k\varphi}<\infty$. Then, for each positive real number $\varepsilon$ and positive integer $l$, there is a (singular) Hermitian metric $H^{l}_{\varepsilon}$ on $S^{l}E$ such that
\begin{enumerate}
  \item[(a)] $H^{l}_{\varepsilon}$ is smooth on $Y^{\prime}$, where $Y^{\prime}$ is an open subvariety of $Y$ independent of $l$ and $\varepsilon$;
  \item[(b)] the sequence of metrics $\{\varphi_{\varepsilon}\}$ on $\mathcal{O}_{E}(1)$ that defines $H^{l}_{\varepsilon}$ converges locally uniformly, decreasingly to $\varphi$ on $\pi^{-1}(Y^{\prime})$. Equivalently, $H^{l}_{\varepsilon}$ converges locally uniformly, increasingly to $S^{l}H$ on $Y^{\prime}$ for every $l$;
  \item[(c)] $\mathscr{I}(\varphi)=\mathscr{I}(\varphi_{\varepsilon})$ for all $\varepsilon$;
  \item[(d)] for every $\varepsilon>0$, 
  \[
  i\Theta_{\mathcal{O}_{E}(1),\varphi_{\varepsilon}}\geqslant v-\varepsilon\Omega.
  \]
\end{enumerate}
If $(E,H)$ is strongly positive in the sense of Nakano (see Definition \ref{d24}), we moreover have
\begin{enumerate}
  \item[(e)] for every relatively compact subset $Y^{\prime\prime}\subset\subset Y^{\prime}$ and every $l$,
  \[
  i\Theta_{S^{l}E,H^{l}_{\varepsilon}}\geqslant-C_{l}\varepsilon\omega\otimes\mathrm{Id}_{S^{l}E}
  \]
  over $Y^{\prime\prime}$ in the sense of Nakano (see Definition \ref{d21}) for a constant $C_{l}$.
\end{enumerate}
\end{proposition}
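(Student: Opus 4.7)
The plan is to reduce everything to Demailly's regularization theorem applied to the singular psh weight $\varphi$ on the line bundle $\mathcal{O}_E(1)$ over the compact K\"{a}hler manifold $(X,\Omega)$, and then transfer the output back to $S^lE$ via the canonical identification $\pi_\ast\mathcal{O}_E(l)=S^lE$.

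First I would run the local Bergman-kernel regularization of \cite{Dem92,DPS01} applied to $\varphi$ on $\mathcal{O}_E(1)$. In a trivializing cover one forms Bergman-type kernels from $L^{2}$ sections of $\mathcal{O}_E(1)^{\otimes m}$ weighted by $m\varphi$, glues using a partition of unity, and corrects the gluing errors via an Ohsawa--Takegoshi-type extension. The hypothesis that there exists a section $\xi\in H^{0}(X,\mathcal{O}_E(k))$ with $\sup_{X}|\xi|_{k\varphi}<\infty$ is precisely what guarantees that the Bergman kernels are not identically zero and that the relevant $L^{2}$ integrals are finite, so the construction produces a genuinely global sequence of singular metrics $\varphi_{\varepsilon}$ on $\mathcal{O}_E(1)$ decreasing to $\varphi$, smooth outside a proper analytic subset $Z\subset X$ independent of $\varepsilon$, preserving the multiplier ideal sheaf, and satisfying $i\Theta_{\mathcal{O}_E(1),\varphi_{\varepsilon}}\geqslant v-\varepsilon\Omega$. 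This directly yields (c) and (d).

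Setting $Y':=Y\setminus\pi(Z)$, the regularized weight $l\varphi_{\varepsilon}$ on $\mathcal{O}_E(l)=\mathcal{O}_E(1)^{\otimes l}$ is smooth on $\pi^{-1}(Y')$, and the fiberwise correspondence $\pi_\ast\mathcal{O}_E(l)=S^lE$ from \cite{Wu20b} produces a smooth Hermitian metric $H^{l}_{\varepsilon}$ on $S^lE|_{Y'}$, giving (a). The convergence in (b) then follows from the pointwise compatibility of this correspondence: uniform decreasing convergence $\varphi_{\varepsilon}\searrow\varphi$ on compact subsets of $\pi^{-1}(Y')$ translates, via duality of the fiberwise Fubini--Study construction of \cite{Wu20b}, into uniform increasing convergence $H^{l}_{\varepsilon}\nearrow S^lH$ on compact subsets of $Y'$.

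The main obstacle is (e). Demailly's theorem only supplies a scalar curvature bound on $\mathcal{O}_E(1)$, while Nakano positivity on $S^lE$ is a matrix-valued condition. Here I would exploit the definition of strong Nakano positivity of $(E,H)$ (Definition~\ref{d24}), which is tailored so that a curvature lower bound on $\mathcal{O}_E(1)$ quantitatively implies a Nakano lower bound on $S^lE$. Concretely, over the relatively compact $Y''\subset\subset Y'$ the Nakano form of $H^{l}_{\varepsilon}$ can be recovered from $i\Theta_{\mathcal{O}_E(l),l\varphi_{\varepsilon}}$ by fiber integration paired against horizontal lifts of tangent vectors of $Y$, in the spirit of Berndtsson's direct image calculations. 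Combining this with the estimate $i\Theta_{\mathcal{O}_E(1),\varphi_{\varepsilon}}\geqslant v-\varepsilon\Omega$ and the uniform comparability of $\Omega$ with $\pi^{\ast}\omega+\Omega_{\mathrm{vert}}$ on the compact set $\pi^{-1}(Y'')$, one extracts the bound $i\Theta_{S^lE,H^{l}_{\varepsilon}}\geqslant -C_l\varepsilon\,\omega\otimes\mathrm{Id}_{S^lE}$, with $C_l$ absorbing $l$, the fiber volume and the transition from $\Omega$ to $\omega$. The hardest technical step is controlling the matrix-valued error terms introduced by the local Bergman gluing; these can be absorbed into the $O(\varepsilon)$ term precisely because the hypothesis is strong Nakano, rather than merely Griffiths, positivity.
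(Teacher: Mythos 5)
Your overall strategy coincides with the paper's: regularize $\varphi$ on $\mathcal{O}_{E}(1)$ over $X$ via \cite{Dem92,DPS01}, push the weights $l\varphi_{\varepsilon}$ down through $\pi_{\ast}\mathcal{O}_{E}(l)=S^{l}E$ to get $H^{l}_{\varepsilon}$, and use a Berndtsson-type fiber-integration formula for (e). But two steps as you state them do not close. First, your definition $Y':=Y\setminus\pi(Z)$ is wrong: $Z$ is a hypersurface in $X$ (contained in $\{\xi=0\}$), and for $r\geqslant 1$ its image under $\pi$ is typically all of $Y$, so your $Y'$ can be empty. The correct set is $Y':=\pi(X\setminus Z)=Y\setminus\{\pi_{\ast}\xi=0\}$, i.e.\ the set of $y$ where $\xi$ does not vanish identically on the fiber $X_{y}$. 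Relatedly, you assert that the singular locus $Z$ can be taken independent of $\varepsilon$, but the regularization theorem only gives $\varepsilon$-dependent sets $Z_{\varepsilon}$; the paper obtains independence by combining $\varphi_{\varepsilon}\geqslant\varphi$ with $\sup_{X}|\xi|_{k\varphi}<\infty$ (which forces the Lelong numbers of every $\varphi_{\varepsilon}$ to vanish off $\{\xi=0\}$) and the log-pole refinement of \cite{DPS01}. Your stated role for $\xi$ (nonvanishing of Bergman kernels) is not what is needed, and the independence claim is left unproved.

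Second, and more seriously, your derivation of (e) applies fiber integration to $i\Theta_{\mathcal{O}_{E}(l),l\varphi_{\varepsilon}}$ directly, but $H^{l}_{\varepsilon}$ is the $L^{2}$-metric on $\pi_{\ast}(K_{X/Y}\otimes L)$ only for the twisted bundle $L=\mathcal{O}_{E}(r+l+1)\otimes\pi^{\ast}\det E^{\ast}$ (using $K_{X/Y}=\mathcal{O}_{E}(-r-1)\otimes\pi^{\ast}\det E$), so Berndtsson's inequality bounds the Nakano form of $H^{l}_{\varepsilon}$ by the geodesic curvature of the weight $(r+l+1)\varphi_{\varepsilon}-\pi^{\ast}\phi$, where $\phi$ is the weight of $\det H$. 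A lower bound on $i\Theta_{\mathcal{O}_{E}(1),\varphi_{\varepsilon}}$ alone says nothing about the $-\pi^{\ast}\phi$ contribution; this is exactly where the \emph{strong} Nakano hypothesis enters, since Definition \ref{d24}(3) is the positivity of $i\partial\bar{\partial}\varphi-\frac{1}{r+2}\pi^{\ast}\Theta_{\det E,\det H}$, which (together with positivity of $i\partial\bar{\partial}\varphi$ itself) gives $i\Theta_{\mathcal{O}_{E}(r+l+1),(r+l+1)\varphi_{\varepsilon}}-i\pi^{\ast}\Theta_{\det E,\phi}\geqslant-C_{l}\varepsilon\Omega$. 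You invoke the hypothesis only as something ``tailored'' to make the implication work, without identifying this mechanism, so the key inequality is not actually established. By contrast, the ``matrix-valued gluing errors'' you flag as the hardest step are a non-issue once the line-bundle regularization is taken as a black box; the genuine work in (e) is the adjunction bookkeeping with $K_{X/Y}$ and $\det E$ and the uniform bound on $\int_{X_{y}}ic(\Omega)_{i\bar{j}}s_{i}\wedge\bar{s}_{j}e^{-(r+l+1)\varphi_{\varepsilon}+\pi^{\ast}\phi}$ over $Y''$.
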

	Here $S^{l}H$ is the natural metric on the $l$-th symmetric product $S^{l}E$ induced by $H$. One could find many similarities between the strongly Nakano positivity and the $\min\{n,r+1\}$-nefness defined in \cite{Cat98} based on Proposition \ref{p11}, (e), but we will not give any further discussions in this paper.

Next, for two $E$-valued $(n,q)$-forms $\alpha,\beta$ (not necessary to be $\bar{\partial}$-closed), we say they are cohomologically equivalent if there exits an $E$-valued $(n,q-1)$-form $\gamma$ such that $\alpha=\beta+\bar{\partial}\gamma$. We denote by $\alpha\in[\beta]$ this equivalence relationship. Now $H$ is approximated by $\{H_{\varepsilon}\}$. Since $H_{\varepsilon}$ is smooth on $Y^{\prime}$, the associated Laplacian $\Box_{\varepsilon}$ is well-defined. The harmonic form associated to $H$ is defined as follows:
\begin{definition}\label{d11}
Let $\alpha$ be an $E$-valued $(n,q)$-form on $Y$ such that the $L^{2}$-norm against $H$ is bounded. Assume that for every $\varepsilon$, there exists a cohomological equivalent class $\alpha_{\varepsilon}\in[\alpha|_{Y^{\prime}}]$ such that
\begin{enumerate}
  \item[(a)] $\Box_{\varepsilon}\alpha_{\varepsilon}=0$ on $Y^{\prime}$;
  \item[(b)] $\alpha_{\varepsilon}\rightarrow\alpha|_{Y^{\prime}}$ in the $L^{2}$-norm against $H$.
\end{enumerate}
Then we call $\alpha$ a $\Box_{0}$-harmonic form, and denote it by $\Box_{0}\alpha=0$. Notice that this equality is taken in the sense of $L^{2}$-topology. The space of all the $\Box_{0}$-harmonic forms is denoted by
\[
\mathcal{H}^{n,q}(Y,E(H),\Box_{0}).
\]
\end{definition}
The analytic sheaf $E(H)$ is defined in \cite{Cat98} as
\[
E(H)_{y}:=\{u\in E_{y};\|u\|^{2}_{H,\omega}\textrm{ is integrable in some neighbourhood of }y\}
\]
for a given point $y\in Y$. Although it has already been proved in \cite{Cat98,HoI20} that $E(H)$ is coherent \cite{Har77} in several situations, we would like to present a general version that describes the coherence in Proposition \ref{p23}. In particular, $E(H)$ will always be coherent in our context in the view of Proposition \ref{p23}. Although the $\Box_{0}$-harmonic space is defined only on $Y^{\prime}$, we can still prove the following proposition when $(E,H)$ is strongly Nakano positive (see Definition \ref{d24}).
\begin{proposition}[A singular version of Hodge's theorem]\label{p12}
Assume that $(E,H)$ is strongly positive in the sense of Nakano. The following isomorphism holds:
\begin{equation}\label{e11}
   \mathcal{H}^{n,q}(Y,E(H),\Box_{0})\simeq H^{q}(Y,K_{Y}\otimes E(H)).
\end{equation}
In particular, when $H$ is smooth, we have $E(H)=E$. Thus, 
\[
\alpha\in\mathcal{H}^{n,q}(Y,E,\Box_{0})
\] 
if and only if $\alpha$ is harmonic in the usual sense.
\end{proposition}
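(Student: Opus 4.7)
The plan is to define the Dolbeault-type map
\[
\Psi: \mathcal{H}^{n,q}(Y,E(H),\Box_0) \longrightarrow H^q(Y, K_Y\otimes E(H)), \quad \alpha \longmapsto [\alpha],
\]
and show it is an isomorphism. First I would identify $H^q(Y, K_Y\otimes E(H))$ with the $\bar{\partial}$-cohomology of globally $L^2$ $E$-valued $(n,\ast)$-forms measured against $H$, by checking that the sheaves of locally $L^2$ forms against $H$ provide a fine resolution of $K_Y\otimes E(H)$: exactness is the local $L^2$-Dolbeault lemma, while fineness follows from the $C^\infty(Y)$-module structure. Well-definedness of $\Psi$ then reduces to $\bar{\partial}\alpha=0$; on $Y'$ this is immediate from the relation $\alpha|_{Y'}=\alpha_\varepsilon+\bar{\partial}\gamma_\varepsilon$ combined with $\bar{\partial}\alpha_\varepsilon=0$, and the identity extends to all of $Y$ in the distribution sense because the thin analytic subvariety $Y\setminus Y'$ is removable for $L^2$ solutions of $\bar{\partial}$.

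For injectivity, suppose $\Psi(\alpha)=0$, so $\alpha=\bar{\partial}\beta$ with $\beta$ locally (hence, by compactness of $Y$, globally) $L^2$ against $H$. The decomposition $\alpha|_{Y'}=\alpha_\varepsilon+\bar{\partial}\gamma_\varepsilon$ together with $\bar{\partial}^{*}_{H_\varepsilon}\alpha_\varepsilon=0$ yields
\[
\|\alpha_\varepsilon\|^2_{H_\varepsilon}=\langle \alpha_\varepsilon, \alpha\rangle_{H_\varepsilon}=\langle \alpha_\varepsilon, \bar{\partial}\beta\rangle_{H_\varepsilon}=0,
\]
the last equality coming from integration by parts, which on the non-compact $Y'$ I would justify by a cutoff/exhaustion argument exploiting the finite $L^2(H_\varepsilon)$-norms and the smoothness of $\alpha_\varepsilon$. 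Taking $L^2(H)$-limits as $\varepsilon\to 0$ then forces $\alpha|_{Y'}=0$, hence $\alpha=0$.

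Surjectivity is the substantive step. Given a class $[\sigma]\in H^q(Y, K_Y\otimes E(H))$ represented by a $\bar{\partial}$-closed $L^2(H)$-form $\sigma$, for each $\varepsilon$ I would produce a harmonic representative $\alpha_\varepsilon\in [\sigma|_{Y'}]$ on $Y'$ equipped with a complete K\"ahler metric (available by Grauert's trick on the Zariski-open $Y'$) via $L^2$-Hodge theory against the smooth metric $H_\varepsilon$; the required coercivity is furnished by the Bochner--Kodaira--Nakano identity together with the approximate Nakano positivity $i\Theta_{S^{l}E,H^{l}_\varepsilon}\geq -C_{l}\varepsilon\,\omega\otimes\mathrm{Id}$ supplied by Proposition \ref{p11}(e). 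The monotonicity in Proposition \ref{p11}(b) yields the uniform bound $\|\alpha_\varepsilon\|_{H_\varepsilon}\leq\|\sigma\|_{H_\varepsilon}\leq\|\sigma\|_H$, allowing extraction of a weak $L^2(H)$-limit $\alpha$; this $\alpha$ is the desired $\Box_0$-harmonic form, and promoting the local cohomological equivalence $\sigma-\alpha=\bar{\partial}(\cdot)$ on $Y'$ to a global one on $Y$ relies on coherence of $E(H)$ (Proposition \ref{p23}) together with the fact that an $L^2$ section of the Dolbeault complex that vanishes on a dense open subset vanishes globally as a distribution. The principal obstacle is precisely this surjectivity, where the Hodge decomposition on the non-compact $Y'$, uniform estimates across the sequence $\{H_\varepsilon\}$, and the extension of cohomological triviality across the thin singular locus $Y\setminus Y'$ must all be orchestrated together.
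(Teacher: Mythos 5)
Your proposal is correct and follows essentially the same route as the paper: identify $H^{q}(Y,K_{Y}\otimes E(H))$ with $L^{2}$-Dolbeault cohomology via the de Rham--Weil isomorphism (Proposition \ref{p24}), use the orthogonal decomposition for the complete K\"ahler metrics $\tilde{\omega}_{l}$ on $Y^{\prime}$ with the smooth approximants $H_{\varepsilon}$ to produce harmonic representatives with the uniform bound $\|\alpha_{l,\varepsilon}\|\leqslant\|\alpha\|_{H,\omega}$, and pass to a limit. The only cosmetic differences are that you organize the argument as one map plus injectivity/surjectivity rather than the paper's two mutually inverse maps $i$ and $j$, and that you extend across $Y\setminus Y^{\prime}$ by $L^{2}$ removable-singularity arguments where the paper first shows $\ast\tilde{\alpha}$ is holomorphic (Proposition \ref{p13}) and then applies holomorphic $L^{2}$-extension.
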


The group $H^{q}(Y,K_{Y}\otimes E(H))$ is interpreted as a cohomology group associated with a coherent sheaf $K_{Y}\otimes E(H)$ as is explained in \cite{Har77}. Moreover we obtain the following regularity property due to the canonical Bochner technique.
\begin{proposition}\label{p13}
Assume that $(E,H)$ is a (singular) Hermitian vector bundle that is strongly positive in the sense of Nakano. Let $\alpha$ be an $E$-valued $(n,q)$-form whose $L^{2}$-norm against $H$ is bounded. Then
\begin{enumerate}
  \item if $\alpha$ is $\Box_{0}$-harmonic, $\bar{\partial}(\ast\alpha)=0$. In particular, $\ast\alpha$ is holomorphic.
  \item if $\alpha$ is a weak solution of $\Box_{0}\alpha=0$, $\alpha$ must be smooth.
\end{enumerate}
\end{proposition}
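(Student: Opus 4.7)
Both parts pass through the smooth approximation of Proposition \ref{p11}: on $Y^{\prime}$ one has smooth Hermitian approximants $(E, H_{\varepsilon})$ for which the $\bar{\partial}$-Laplacian $\Box_{\varepsilon}$ is elliptic and classical Hodge theory applies to $\alpha_{\varepsilon}$. The argument consists of transferring the desired conclusions across the limit $\varepsilon \rightarrow 0$.

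For part (1), my plan is to use the K\"{a}hler commutation $\ast\Box_{\varepsilon} = \Box_{\varepsilon}\ast$ on the smooth pair $(E, H_{\varepsilon})$, which makes $\ast\alpha_{\varepsilon}$ a $\Box_{\varepsilon}$-harmonic form of bidegree $(n-q, 0)$. Its $\bar{\partial}^{\ast}_{\varepsilon}$-component vanishes for reasons of type, so harmonicity collapses to $\bar{\partial}(\ast\alpha_{\varepsilon}) = 0$ on $Y^{\prime}$. Since the paper's Hodge $\ast$ is defined by $\omega$ alone, it does not depend on $\varepsilon$ and is a pointwise isometry for the $H$-norm on $E$-valued forms; hence the $L^{2}(H)$-convergence $\alpha_{\varepsilon} \rightarrow \alpha|_{Y^{\prime}}$ supplied by Definition \ref{d11} transfers verbatim to $\ast\alpha_{\varepsilon} \rightarrow \ast\alpha$ in $L^{2}(H)$. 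Passing to distributional limits yields $\bar{\partial}(\ast\alpha) = 0$ on $Y^{\prime}$, and hypoellipticity of $\bar{\partial}$ on $(n-q, 0)$-forms identifies $\ast\alpha$ with a holomorphic section of $\Omega^{n-q}_{Y^{\prime}} \otimes E$.

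Part (2) then follows at once from part (1). The operator $\ast$ depends only on the smooth form $\omega$ and is a smooth bundle map satisfying $\ast\ast = \pm\mathrm{Id}$, so $\alpha = \pm\ast(\ast\alpha)$. Part (1) produces $\ast\alpha$ as a holomorphic, hence smooth, section of $\Omega^{n-q}_{Y^{\prime}} \otimes E$, and applying $\ast$ once more returns a smooth form which must agree with $\alpha$ in $L^{2}(H)$ and therefore pointwise where $H$ is regular.

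The main technical point, concentrated in part (1), is to verify that the K\"{a}hler commutation $\ast\Box_{\varepsilon} = \Box_{\varepsilon}\ast$ really holds for the $\bar{\partial}$-Laplacian on $E$-valued forms built from the Chern connection of $H_{\varepsilon}$, and that $\bar{\partial}^{\ast}_{\varepsilon}$ annihilates every $(n-q,0)$-form automatically. Both are classical, deducible from the K\"{a}hler identities $[\Lambda_{\omega}, \bar{\partial}] = -i\partial^{\ast}_{H_{\varepsilon}}$ together with $\ast\ast = \pm\mathrm{Id}$, but one must track sign and conjugation conventions carefully so that the formal calculation matches the definition of $\bar{\partial}^{\ast}_{\varepsilon} = \ast\partial_{H_{\varepsilon}}\ast$ adopted in the paper. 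Once this is in place the exchange of $\bar{\partial}$ with the $L^{2}$-limit is painless, precisely because the paper's $\ast$ is $\varepsilon$-independent; the strongly Nakano positive hypothesis on $(E, H)$ then enters only through its role in supplying the approximation of Proposition \ref{p11} in the first place.
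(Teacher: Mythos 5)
Your reduction of part (2) to part (1) and your observation that $\ast$ is $\varepsilon$-independent are fine, but the engine of your part (1) — the commutation $\ast\Box_{\varepsilon}=\Box_{\varepsilon}\ast$ for the $\bar{\partial}$-Laplacian of $(E,H_{\varepsilon})$ — is false for a bundle with nonvanishing curvature, and this is a genuine gap rather than a sign-convention issue. Conjugating $\bar{\partial}$ by $\ast$ produces $\partial^{\ast}_{H_{\varepsilon}}$ (up to sign), so what the Hodge star actually intertwines is $\Box_{\varepsilon}=\bar{\partial}\bar{\partial}^{\ast}+\bar{\partial}^{\ast}\bar{\partial}$ with the \emph{other} Laplacian $\bar{\Box}_{\varepsilon}=\partial_{H_{\varepsilon}}\partial^{\ast}_{H_{\varepsilon}}+\partial^{\ast}_{H_{\varepsilon}}\partial_{H_{\varepsilon}}$, and by the Bochner--Kodaira--Nakano identity (Proposition \ref{p25}) these differ by the curvature operator $[i\Theta_{E,H_{\varepsilon}},\Lambda]$. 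If your commutation held, you would deduce $\bar{\partial}(\ast\alpha)=0$ for a harmonic $(n,q)$-form with values in \emph{any} Hermitian bundle, with no positivity hypothesis at all — and indeed you remark that positivity enters ``only through its role in supplying the approximation,'' which is the tell. This is contradicted already by $E=\mathcal{O}(-3)$ on $\mathbb{P}^{1}$ with $q=1$: there $H^{1}(K\otimes E)\neq 0$, so a nonzero harmonic $(1,1)$-form $\alpha$ exists, yet $\ast\alpha$ would be a holomorphic section of a negative line bundle, hence zero.

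The paper's actual argument is exactly the missing step: write the Bochner identity for the harmonic representatives $\alpha_{l,\varepsilon}$,
\begin{equation*}
0=\|\partial^{\ast}_{H_{\varepsilon}}\alpha_{l,\varepsilon}\|^{2}+\|\partial_{H_{\varepsilon}}\alpha_{l,\varepsilon}\|^{2}+\langle i[\Theta_{E,H_{\varepsilon}},\Lambda]\alpha_{l,\varepsilon},\alpha_{l,\varepsilon}\rangle,
\end{equation*}
and use Proposition \ref{p11}(e), i.e.\ $i\Theta_{E,H_{\varepsilon}}\geqslant-\varepsilon\omega\otimes\mathrm{Id}_{E}$ in the sense of Nakano, to bound the curvature term from below by $-\varepsilon^{\prime}$; letting $\varepsilon\rightarrow0$ forces $\partial^{\ast}_{H_{\varepsilon}}\alpha_{l,\varepsilon}\rightarrow0$, and since $\partial^{\ast}_{H_{\varepsilon}}=\pm\ast\bar{\partial}\ast$ is independent of $\varepsilon$ this yields $\bar{\partial}(\ast\alpha)=0$ on $Y^{\prime}$ in the limit. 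So the strong Nakano positivity is doing real work inside the Bochner inequality, not merely supplying the regularisation. Two smaller points: the paper also extends $\ast\alpha$ from $Y^{\prime}$ to all of $Y$ by the Ohsawa--Takegoshi-type $L^{2}$-extension theorem, which your write-up omits; and for part (2) the paper recovers $\alpha$ as $c_{n-q}\omega^{q}\wedge\ast\alpha$ rather than via $\ast\ast=\pm\mathrm{Id}$, though that difference is cosmetic once part (1) is in place.
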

Here $\ast$ refers to the Hodge $\ast$-operator defined by $\omega$. Based on the harmonic theory constructed above, we then generalise several canonical results of Koll\'{a}r \cite{Ko86a,Ko86b} and Takegoshi \cite{Tak95} as follows:
\begin{theorem}\label{t11}
Let $f:Y\rightarrow Z$ be a fibration between two compact K\"{a}hler manifolds. Let $n=\dim Y$ and $m=\dim Z$. Suppose that $(E,H)$ is a (singular) Hermitian vector bundle over $Y$ that is strongly positive in the sense of Nakano. Moreover, assume that $H|_{Y_{z}}$ is well-defined for every $z\in Z$. Then the following theorems hold:

{\rm \textbf{I Decomposition Theorem}}. The Leray spectral sequence \cite{GrH78}
  \[
  E^{p,q}_{2}=H^{p}(Z,R^{q}f_{\ast}(K_{Y}\otimes E(H)))\Rightarrow H^{p+q}(Y,K_{Y}\otimes E(H))
  \]
  degenerates at $E_{2}$. As a consequence, it holds that
  \[
  \dim H^{i}(Y,K_{Y}\otimes E(H))=\sum_{p+q=i}\dim H^{p}(Z,R^{q}f_{\ast}(K_{Y}\otimes E(H)))
  \]
  for any $i\geqslant0$.
  
{\rm \textbf{II Torsion freeness Theorem}}. For $q\geqslant0$ the sheaf homomorphism
  \[
  L^{q}: f_{\ast}(\Omega^{n-q}\otimes E(H))\rightarrow R^{q}f_{\ast}(K_{Y}\otimes E(H))
  \]
  induced by the $q$-times left wedge product by $\omega$ admits a splitting sheaf homomorphism
  \[
  S^{q}: R^{q}f_{\ast}(K_{Y}\otimes E(H))\rightarrow f_{\ast}(\Omega^{n-q}\otimes E(H))\textrm{ with }L^{q}\circ S^{q}=\mathrm{id}.
  \]
  In particular, $R^{q}f_{\ast}(K_{Y}\otimes E(H))$ is torsion free \cite{Kob87} for $q\geqslant0$ and vanishes if $q>n-m$. Furthermore, it is even reflexive if $E(H)=E$.
  
{\rm \textbf{III Injectivity Theorem}}. Let $(L,h)$ be a (singular) Hermitian line bundle over $Y$. Recall that $Y^{\prime}$ is the open subvariety appeared in Proposition \ref{p11}. Assume the following conditions:
\begin{enumerate}
  \item[(a)] the singular part of $h$ is contained in $Y-Y^{\prime}$;
  \item[(b)] $i\Theta_{L,h}\geqslant\gamma$ for some real smooth $(1,1)$-form $\gamma$ on $Y$;
  \item[(c)] for some rational $\delta\ll1$, the $\mathbb{Q}$-twisted bundle
  \[
  E<-\delta L>|_{Y_{z}}
  \]
  is strongly positive in the sense of Nakano for every $z$.
\end{enumerate}
For a (non-zero) section $s$ of $L$ with $\sup_{Y}|s|_{h}<\infty$, the multiplication map induced by the tensor product with $s$
  \[
 R^{q}f_{\ast}(s): R^{q}f_{\ast}(K_{Y}\otimes E(H))\rightarrow R^{q}f_{\ast}(K_{Y}\otimes(E\otimes L)(H\otimes h))
  \]
  is well-defined and injective for any $q\geqslant0$.
  
{\rm \textbf{IV Relative vanishing Theorem}}. Let $g:Z\rightarrow W$ be a fibration to a compact K\"{a}hler manifold $W$. Then the Leray spectral sequence:
  \[
  R^{p}g_{\ast}R^{q}f_{\ast}(K_{Y}\otimes E(H))\Rightarrow R^{p+q}(g\circ f)_{\ast}(K_{Y}\otimes E(H))
  \]  
  degenerates. 
\end{theorem}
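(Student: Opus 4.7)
The plan is to mirror the classical strategy of Takegoshi \cite{Tak95}, but carried out with the $\Box_{0}$-harmonic forms of Definition \ref{d11} in place of the usual harmonic forms. The common engine is Proposition \ref{p11}, which produces smooth approximations $H_{\varepsilon}$ on $Y^{\prime}$ with the Nakano lower curvature bound $-C_{l}\varepsilon\omega\otimes\mathrm{Id}$, together with Proposition \ref{p12} and Proposition \ref{p13}, which give the Hodge-type isomorphism and the regularity of $\Box_{0}$-harmonic representatives. Concretely, every cohomology class in $H^{q}(Y,K_{Y}\otimes E(H))$ will be represented by a $\Box_{0}$-harmonic $(n,q)$-form; operating on this representative and then bounding uniformly in $\varepsilon$ before letting $\varepsilon\to 0$ is the common thread of all four parts.

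For part I, I would first verify that the edge map
\[
H^{0}(Z,R^{q}f_{\ast}(K_{Y}\otimes E(H)))\hookrightarrow H^{q}(Y,K_{Y}\otimes E(H))
\]
is realised by lifting local sections to $\Box_{0}$-harmonic $(n,q)$-forms on $Y$. Following Takegoshi, the key step is to show that for a $\Box_{0}$-harmonic $\alpha$ on $Y$ the pieces along and transverse to the fibres of $f$ stay separately harmonic (a consequence of the K\"ahler identities applied to each $H_{\varepsilon}$ and then passed to the limit via Proposition \ref{p13}). This orthogonal bigrading of the harmonic space in terms of $f$ produces exactly the splitting forcing $E_{2}$-degeneration, and the dimension formula is then formal. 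Part II follows in the same spirit: the $L^{q}$-map sends a $\Box_{0}$-harmonic representative to a harmonic representative of its image, so Takegoshi's construction of $S^{q}$ via the fibrewise primitive decomposition, transplanted to $(E,H)$ through the approximations $H_{\varepsilon}$, gives the required sheaf homomorphism. Torsion freeness, vanishing for $q>n-m$, and reflexivity in the case $E(H)=E$ are then formal consequences of the existence of $S^{q}$.

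Part III is the hardest piece and will occupy the bulk of the work. I would use the twisted Bochner--Kodaira--Nakano inequality applied to the form $s\alpha$, where $\alpha$ is the $\Box_{0}$-harmonic representative of a class $[\alpha]\in R^{q}f_{\ast}(K_{Y}\otimes E(H))$ killed by tensoring with $s$. Condition (c) allows one to apply the inequality to the $\mathbb{Q}$-twisted bundle $E<-\delta L>$ on each fibre with weight $|s|_{h}^{2\delta}$; condition (b) then makes the remaining $L$-curvature term non-negative, so that on every fibre the section $s\alpha=\bar{\partial}\beta$ with controlled $L^{2}$-norm already forces $\alpha=0$ as a cohomology class. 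The main obstacle is to carry out this limit argument rigorously: one has to ensure that the cut-off functions localising to $Y^{\prime}$ (condition (a)) and the $\varepsilon$-approximations produced by applying Proposition \ref{p11} simultaneously to $H$, to $H\otimes h$, and to a regularisation of the weight $|s|_{h}^{2\delta}$, all respect the fibrewise positivity hypothesis (c), and that the resulting $L^{2}$-estimates pass to the limit despite the possibly very singular weight $h^{\delta}$.

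Finally, part IV is formal once I and III are available. Applying I to the composed fibration $g\circ f:Y\to W$, to $g:Z\to W$, and to $f:Y\to Z$ separately, and combining the resulting degenerations, pins down the relative Leray spectral sequence; equivalently, one reruns the argument of part I with $Z$ replaced by the base $W$ and with cohomology replaced by higher direct images, using that each $R^{q}f_{\ast}(K_{Y}\otimes E(H))$ already carries a harmonic description over fibres of $g$. I expect part III to be the principal technical obstacle, because both the singular metric $H$ on $E$ and the singular metric $h$ on $L$ enter simultaneously, and one must keep their combined singular locus inside $Y-Y^{\prime}$ while preserving the twisted Nakano positivity along each fibre $Y_{z}$.
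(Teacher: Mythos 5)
Your plan is essentially the paper's: all four parts are run through the Takegoshi-style harmonic machinery built from Propositions \ref{p11}--\ref{p13}, with the Bochner identity applied to each smooth approximation $H_{\varepsilon}$ on $Y'$ and the limit taken in $\varepsilon$, and part IV is deduced formally from part I. Two remarks on where the paper's execution differs from, and is simpler than, what you sketch. For part I the paper does not decompose the global harmonic space into fibre and transverse pieces; it works over a Stein cover $\{U,r_U\}$ of the base, uses the local harmonic spaces $\mathcal{H}^{n,q}(f^{-1}(U),E(H),f^{\ast}r_{U})$ of Proposition \ref{p41}, identifies their sheafification with $R^{q}f_{\ast}(K_{Y}\otimes E(H))$, and then kills $d_{2}$ by the one-line observation that the $\bar{\partial}$-component of the \v{C}ech--Dolbeault differential vanishes on harmonic cochains; your ``orthogonal bigrading'' of $\mathcal{H}^{n,q}(Y,E(H),\Box_{0})$ would, if completed, give the dimension count and hence degeneration, but identifying its summands with $H^{p}(Z,R^{q}f_{\ast}(\cdot))$ requires essentially the same local theory, so you gain nothing by that detour. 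For part III the paper neither argues from $s\alpha=\bar{\partial}\beta$ nor introduces the auxiliary weight $|s|_{h}^{2\delta}$: it shows directly that multiplication by $s$ maps harmonic forms to harmonic forms, the only nontrivial point being the curvature term, which is bounded by $\tfrac{1}{\delta}|s|^{2}_{h}\lim\langle ie(\Theta_{E,H_{\varepsilon}})\Lambda\alpha,\alpha\rangle_{H_{\varepsilon}}=0$ using hypothesis (c) together with $\sup_{Y}|s|_{h}<\infty$ and the single regularising family $\{H_{\varepsilon}\otimes h_{\varepsilon}\}$; once $\otimes s$ is well defined on harmonic spaces its injectivity is immediate, so the simultaneous regularisation of three metrics that you flag as the principal obstacle does not actually arise.
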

Theorem II can also been seen as a singular version of the hard Lefschetz theorem \cite{GrH78}. The definition of the $\mathbb{Q}$-twisted bundle in Theorem III can be found in \cite{Laz04}. Except that, $(E\otimes L)(H\otimes h)$ is interpreted as the following sheaf:
\[
\begin{split}
(E\otimes L)(H\otimes h)_{y}:=&\{u\in(E\otimes L)_{y};\|u\|^{2}_{H\otimes h,\omega}\textrm{ is integrable}\\
&\textrm{ in some neighbourhood of }y\}.
\end{split}
\]
It is also coherent by Proposition \ref{p23}.

In the end we discuss various vanishing theorems. 
\begin{theorem}\label{t12}
Let $f:Y\rightarrow Z$ be a fibration between two compact K\"{a}hler manifolds.  

{\rm \textbf{I Nadel-type vanishing Theorem}}. Let $(L,h)$ be an $f$-big line bundle, and let $(E,H)$ be a vector bundle that is strongly positive in the sense of Nakano. Assume that $H|_{Y_{z}}$ is well-defined for every $z$. Then
\[
R^{q}f_{\ast}(K_{Y}\otimes(E\otimes L)(H\otimes h))=0\textrm{ for every }q>0.
\]

{\rm \textbf{II Nakano-type vanishing Theorem}}. Assume that $(E,H)$ is strongly strictly positive in the sense of Nakano. Then
\[
H^{q}(Y,K_{Y}\otimes S^{l}E(S^{l}H))=0\textrm{ for every }l,q>0.
\] 

{\rm \textbf{III Griffiths-type vanishing Theorem}}. Assume that $(E,H)$ is strictly positive in the sense of Griffiths (see Definition \ref{d24}). Then
\[
H^{q}(Y,K_{Y}\otimes S^{l}(E\otimes\det E)(S^{l}(H\otimes\det H)))=0\textrm{ for every }l,q>0.
\] 
\end{theorem}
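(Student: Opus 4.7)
I would attack the three statements by exploiting the harmonic framework already established. The strategy is uniform: apply the singular Hodge isomorphism of Proposition \ref{p12} to replace a cohomology class by a $\Box_{0}$-harmonic representative, then use the Bochner--Kodaira--Nakano inequality on each smooth approximation $H^{l}_{\varepsilon}$ from Proposition \ref{p11}(e) to force the representative to vanish in the limit. I would treat the analytically cleanest case II first, derive III from II via the Demailly--Skoda trick, and deduce I from Theorem \ref{t11}.III together with a decomposition of the $f$-big line bundle.

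\textbf{Nakano-type (II).} By Proposition \ref{p12} applied to $(S^{l}E, S^{l}H)$, it suffices to show $\mathcal{H}^{n,q}(Y, S^{l}E(S^{l}H), \Box_{0}) = 0$. Let $\alpha$ be a $\Box_{0}$-harmonic form with approximants $\alpha_{\varepsilon}$ supplied by Definition \ref{d11}. On $Y^{\prime}$ each $\alpha_{\varepsilon}$ is $\Box_{\varepsilon}$-harmonic, hence Bochner--Kodaira--Nakano gives
\[
0 = \|\bar{\partial}\alpha_{\varepsilon}\|^{2} + \|\bar{\partial}^{\ast}_{\varepsilon}\alpha_{\varepsilon}\|^{2} \geqslant \int_{Y^{\prime}} \bigl\langle [i\Theta_{S^{l}E, H^{l}_{\varepsilon}}, \Lambda_{\omega}]\alpha_{\varepsilon}, \alpha_{\varepsilon}\bigr\rangle_{H^{l}_{\varepsilon}, \omega}.
\]
Strong strict Nakano positivity, combined with Proposition \ref{p11}(e), forces the curvature operator to be uniformly bounded below by a positive multiple of the identity on $(n,q)$-forms for small $\varepsilon$, yielding $\|\alpha_{\varepsilon}\|^{2}_{H^{l}_{\varepsilon}} \to 0$, whence $\alpha = 0$ by Definition \ref{d11}(b).

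\textbf{Griffiths-type (III) and Nadel-type (I).} For III, the Demailly--Skoda trick shows that if $(E,H)$ is strictly Griffiths positive, then $(E\otimes\det E, H\otimes\det H)$ is strictly Nakano positive; tracking this through the approximation procedure of Proposition \ref{p11} upgrades the property to strong strict Nakano positivity in the sense of Definition \ref{d24}, after which II applied to $E\otimes\det E$ closes the case. For I, since $L$ is $f$-big with $\sup_{Y}|s|_{h}<\infty$ for some section $s$, a relative Kodaira lemma writes $L\sim_{\mathbb{Q}} A+\tfrac{1}{N}D_{s}$ on a suitable modification, where $A$ carries a smooth $f$-positive metric and $D_{s}$ is effective; choosing $N$ large enough that condition (c) of Theorem \ref{t11}.III remains valid, inject $R^{q}f_{\ast}(K_{Y}\otimes(E\otimes L)(H\otimes h))$ into cohomology further twisted by $\tfrac{1}{N}D_{s}$ and iterate. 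The accumulated ampleness contributed by $A$, combined with II applied fibrewise, forces the target to vanish.

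\textbf{Main obstacle.} The delicate step is ensuring that the curvature lower bound from strong strict Nakano positivity survives the Bochner identity on the open subvariety $Y^{\prime}$: one must justify the $L^{2}$-integration by parts on a non-compact set, while the approximants $\alpha_{\varepsilon}$ are only controlled in the $L^{2}$-topology against $H$ rather than against $H^{l}_{\varepsilon}$ near $Y\setminus Y^{\prime}$. Handling this requires cut-off functions adapted to the singular locus together with the locally uniform decrease $\varphi_{\varepsilon}\searrow\varphi$ from Proposition \ref{p11}(b). For Part I the corresponding subtlety is verifying that the fiberwise hypothesis Theorem \ref{t11}.III.(c) genuinely persists under the Kodaira decomposition, which calls for careful bookkeeping of the singularities of $h$ against those of $H$.
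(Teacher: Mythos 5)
Your treatment of Parts II and III is essentially the paper's own argument: for II, Proposition \ref{p12} reduces the statement to the vanishing of $\mathcal{H}^{n,q}(Y,S^{l}E(S^{l}H),\Box_{0})$, and the Bochner identity applied to the approximants $\alpha_{\varepsilon}$ together with the strict lower bound $i\Theta_{S^{l}E,H^{l}_{\varepsilon}}\geqslant C_{l}(1-\varepsilon)\omega\otimes\mathrm{Id}$ from the strict version of Proposition \ref{p11}(e) forces $\alpha=0$; for III, the paper likewise passes from strict Griffiths positivity of $(E,H)$ to strong strict Nakano positivity of $(E\otimes\det E,H\otimes\det H)$ (citing Theorem 1.3 of the companion paper, which is the singular incarnation of the Demailly--Skoda statement you invoke) and then applies II. Your flagged ``main obstacle'' for II --- justifying the integration by parts on the non-compact $Y^{\prime}$ --- is handled in the paper by the remark that $Y-Y^{\prime}$ is an analytic subvariety of real codimension at least $2$, so the integral identity in (\ref{e24}) persists; no elaborate cut-off construction is needed beyond that.

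Part I is where you genuinely diverge from the paper, and as written your route has a gap. The paper does not pass to a modification or use a relative Kodaira decomposition $L\sim_{\mathbb{Q}}A+\tfrac{1}{N}D_{s}$ at all. Instead it (i) uses $f$-bigness to build a singular metric $h=h_{0}^{\delta}\otimes h_{z}^{1-\delta}$ on $L$ that is well-defined and strictly positive on every fibre, (ii) twists by a sufficiently ample line bundle $A$ pulled back from the base $Z$, for which $R^{q}f_{\ast}(K_{Y}\otimes(E\otimes L)(H\otimes h)\otimes A)=0$ for $q>0$, and (iii) applies the injectivity Theorem \ref{t11}.III with a section of $A$ to inject the untwisted direct image into this zero sheaf. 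The crucial point making (iii) painless is that $A$ is fibrewise trivial, so hypothesis (c) of Theorem \ref{t11}.III ($E<-\delta A>|_{Y_{z}}$ strongly Nakano positive) holds automatically. In your scheme, by contrast, the twisting section lives in $\mathcal{O}(D_{s})$ with $D_{s}$ effective but fibrewise nontrivial, so condition (c) must be verified for $D_{s}$ rather than being free; moreover tensoring by $\tfrac{1}{N}D_{s}=\tfrac{1}{N}(NL-NA)$ and iterating does not visibly accumulate positivity (each step replaces $L$ by $L+k(L-A)$, which moves away from ampleness, not toward it), and the passage to a modification raises the unaddressed question of how $K_{Y}\otimes(E\otimes L)(H\otimes h)$ and its higher direct images transform under pullback and pushdown. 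You should replace this with the base-twist argument: the whole point of working relatively over $Z$ is that positivity can be injected from the base at zero cost to the fibrewise hypotheses.
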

The definition for an $f$-big line bundle can be found in \cite{Fuj13}. Notice that when $E$ is a line bundle, the strong Nakano strict positivity is equivalent to the bigness. At this time Theorem II is just Nadel's vanishing theorem \cite{Nad90}. Furthermore, $(E,H)$ is strongly positive in the sense of Nakano if and only if $(E,H)$ is positive in the sense of Griffiths, if and only if $(E,H)$ is pseudo-effective. In this situation, the topics related to the content of Theorem \ref{t11} and \ref{t12} are fully studied in recent years. See \cite{Eno93,Fuj12,Fuj13,FuM16,GoM17,Mat14,Mat15,Mat16} and the references therein for more details. Our work benefits a lot from them.

\begin{acknowledgment}
The author wants to thank Prof. Jixiang Fu, who brought this problem to his attention and for numerous discussions directly related to this work.
\end{acknowledgment}

\section{Preliminary}
\subsection{Set up}
In the rest of this paper, we will use the following set up.

(I) Let $(Y,\omega)$ and $Z$ be compact K\"{a}hler manifolds with $\dim Y=n$ and $\dim Z=m$. Let $f:Y\rightarrow Z$ be a fibration, which is a surjective holomorphic map with connected fibres. Let $(E,H)$ be a holomorphic vector bundle over $Y$ of rank $r+1$, endowed with a (singular) Hermitian metric $H$ (see Sect.2.3.). Let $X:=\mathbb{P}(E^{\ast})$ be the projectivsed bundle with the natural projection $\pi:X\rightarrow Y$ and tautological line bundle $\mathcal{O}_{E}(1):=\mathcal{O}_{X}(1)$. In particular, let $\varphi$ be the (singular) metric on $\mathcal{O}_{E}(1)$ corresponding to $H$. Let $\Omega$ be a K\"{a}hler metric on $X$.

(II) For every point $y_{0}\in Y$, we take a local coordinate $y=(y_{1},...,y_{n})$ around $y_{0}$. Fix a holomorphic frame $\{u_{0},...,u_{r}\}$ of $E$, the local coordinates of $E$, $X$ and $\mathcal{O}_{E}(1)$ are $(y,U=(U_{0},...,U_{r}))$, $(y,w=(w_{1},...,w_{r}))$ and $(y,w,\xi)$ respectively. For every point $z_{0}\in Z$, we take a local coordinate $z=(z_{1},...,z_{m})$ around $z_{0}$. Moreover, when $z_{0}$ is a regular point of $f$, the system of local coordinate around $Y_{z_{0}}:=f^{-1}(z_{0})$ can be taken as 
\[
\begin{split}
f:Y&\rightarrow Z\\
(z,(y_{m+1},...,y_{n}))&\mapsto z.
\end{split}
\]

We use the following conventions often. 

(i) Denote by $c_{d}=i^{d^{2}}$ for any non-negative integer $d$. Denote
\[ 
dy=dy_{1}\wedge\cdots\wedge d\bar{y}_{n}\textrm{ and }dV_{Y}=c_{n}dy\wedge d\bar{y}.
\] 
It is similar in the other systems of local coordinate.

(ii) Let $G$ be a smooth Hermitian metric on $E$, we write
\[
G_{i}=\frac{\partial G}{\partial y_{i}}, G_{\bar{j}}=\frac{\partial G}{\partial\bar{y}_{j}}, G_{\alpha}=\frac{\partial G}{\partial U_{\alpha}}, G_{\bar{\beta}}=\frac{\partial G}{\partial\bar{U}_{\beta}}
\]
to denote the derivative with respect to 
\[
y_{i},\bar{y}_{j} (1\leqslant i,j\leqslant n)\textrm{ and } U_{\alpha},\bar{U}_{\beta}  (0\leqslant\alpha,\beta\leqslant r).
\] 
The higher order derivative is similar.
\subsection{Hermitian geometry revisit}
We collect some elementary facts here from \cite{Gri69,Kob75,Kob96}. Let $G$ be a smooth Hermitian metric on $E$. Then it induces a dual metric $G^{\ast}$ on $E^{\ast}$ hence a smooth metric $\psi$ on $\mathcal{O}_{E}(1)$. The curvature associated with $G$ is represented as
\[
\Theta_{E,G}=\sum\Theta_{\alpha\bar{\beta}i\bar{j}}dy^{i}\wedge d\bar{y}^{j}\otimes u_{\alpha}\otimes u^{\ast}_{\beta}
\]
with
\[
\Theta_{\alpha\bar{\beta}i\bar{j}}=-G_{\alpha\bar{\beta}i\bar{j}}+\sum_{\delta,\gamma}G^{\gamma\bar{\delta}}G_{\alpha\bar{\delta}i}G_{\gamma\bar{\beta}\bar{j}}.
\]
Now fix a point $y\in Y$, we can always assume that $\{u_{0},...,u_{r}\}$ is an orthonormal basis with respect to $G$ at $y$. The Griffiths and Nakano positivities \cite{Gri69} is defined as follows:
\begin{definition}\label{d21}
Keep notations before,
\begin{enumerate}
  \item $E$ is called (strictly) positive in the sense of Griffiths at $y$, if for any complex vector $z=(z_{1},...,z_{n})$ and section $u=\sum U_{\alpha}u_{\alpha}$ of $E$,
  \[
  \sum i\Theta_{\alpha\bar{\beta}i\bar{j}}U_{\alpha}\bar{U}_{\beta}z_{i}\bar{z}_{j}
  \]
  is (strictly) positive.
  \item $E$ is called (strictly) positive in the sense of Nakano at $y$, if for any $n$-tuple $(u^{1}=\sum U^{1}_{\alpha}u_{\alpha},...,u^{n}=\sum U^{n}_{\alpha}u_{\alpha})$ of sections of $E$,
  \[
  \sum i\Theta_{\alpha\bar{\beta}i\bar{j}}U^{i}_{\alpha}\bar{U}^{j}_{\beta}
  \]
  is (strictly) positive. Equivalently, $i\Theta_{E,G}$ is a (strictly) positive operator on $TY\otimes E$.
\end{enumerate}
\end{definition} 
Kobayashi proposed an intuitive way in \cite{Kob75,Kob96} to characterise the Griffiths positivity. More precisely, let $\{(\log G)^{\bar{\beta}\alpha}\}$ be the inverse matrix of $\{(\log G)_{\alpha\bar{\beta}}\}$. Then for the holomorphic vector field $\frac{\partial}{\partial y_{i}}$ on $Y$, its horizontal lift to $X$ is defined as
\[
\frac{\delta}{\delta y_{i}}:=\frac{\partial}{\partial y_{i}}-\sum_{\alpha,\beta}(\log G)^{\bar{\beta}\alpha}(\log G)_{\bar{\beta}i}\frac{\partial}{\partial w_{\alpha}}.
\]
The dual basis of $\{\frac{\delta}{\delta y_{i}},\frac{\partial}{\partial w_{\alpha}}\}$ will be
\[
\{dy_{i},\delta w_{\alpha}:=dw_{\alpha}+\sum_{i,\beta}(\log G)_{\bar{\beta}i}(\log G)^{\bar{\beta}\alpha}dy_{i}\}.
\]
Let
\[
\Psi:=\sum iK_{\alpha\bar{\beta}i\bar{j}}\frac{w_{\alpha}\bar{w}_{\beta}}{G}dy_{i}\wedge d\bar{y}_{j}
\]
and
\[
\omega_{FS}:=\sum i\frac{\partial^{2}\log G}{\partial w_{\alpha}\partial\bar{w}_{\beta}}\delta w_{\alpha}\wedge\delta\bar{w}_{\beta},
\]
where
\[
K_{\alpha\bar{\beta}i\bar{j}}:=-G_{\alpha\bar{\beta}i\bar{j}}+\sum_{\delta,\gamma}G^{\gamma\bar{\delta}}G_{\alpha\bar{\delta}i}G_{\gamma\bar{\beta}\bar{j}}.
\]
It is easy to verify that they are globally defined $(1,1)$-forms on $X$. Then the celebrated theorem given by Kobayashi says that
\begin{proposition}[Kobayashi, \cite{Kob75,Kob96}]\label{p21}
\begin{equation}\label{e21}
i\partial\bar{\partial}\psi=-\Psi+\omega_{FS}.
\end{equation}
\end{proposition}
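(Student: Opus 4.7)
The identity is a pointwise statement about globally defined $(1,1)$-forms on $X$, so I would verify it at an arbitrary point $x_0 = (y_0, [w_0]) \in X$ and conclude by tensoriality. The plan is to choose a holomorphic frame $\{u_\alpha\}$ of $E$ near $y_0$ that is \emph{normal} for $G$ at $y_0$, meaning $G_{\alpha\bar\beta}(y_0) = \delta_{\alpha\beta}$ and $\partial_i G_{\alpha\bar\beta}(y_0) = 0$ for all $i, \alpha, \beta$. In the induced chart $U_0 \neq 0$ on $X$, and in the trivialisation of $\mathcal{O}_E(1)$ dual to $u_0$, the local expression of $\psi$ is $\log G(y,w)$, where $G(y,w) = \sum_{\alpha,\beta = 0}^{r} G_{\alpha\bar\beta}(y)\, w_\alpha \bar w_\beta$ with the convention $w_0 := 1$.

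The central observation is that in normal coordinates both $G_i(y_0, w)$ and $G_{i\bar\beta}(y_0, w)$ vanish for every $w$, since each is linear in the vanishing quantities $\partial_i G_{\alpha\bar\beta}(y_0)$. Consequently, the connection coefficient $(\log G)_{\bar\beta, i}$ vanishes at $x_0$, and the horizontal coframe $\{dy_i, \delta w_\alpha\}$ coincides with the standard coframe $\{dy_i, dw_\alpha\}$ at that single point.

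I would then expand $i\partial\bar\partial \log G$ and evaluate at $x_0$. The $dy_i \wedge d\bar y_j$ component collapses (since $G_i$ vanishes) to $i(G_{i\bar j}/G)\, dy_i \wedge d\bar y_j$; substituting the identity $K_{\alpha\bar\beta i\bar j}(y_0) = -\partial_i\partial_{\bar j} G_{\alpha\bar\beta}(y_0)$, which holds in a normal frame because the quadratic correction in the definition of $K$ involves $G_{\alpha\bar\delta, i}$ and hence vanishes, turns this into exactly $-\Psi|_{x_0}$. The $dw_\alpha \wedge d\bar w_\beta$ component is $i(\partial^2 \log G / \partial w_\alpha \partial \bar w_\beta)\, dw_\alpha \wedge d\bar w_\beta$, which coincides with $\omega_{FS}|_{x_0}$ thanks to the coincidence of the two coframes at $x_0$. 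The mixed components $dy_i \wedge d\bar w_\beta$ and $dw_\alpha \wedge d\bar y_j$ vanish at $x_0$ because $G_{i\bar\beta}(y_0, w_0) = 0$ and its conjugate. Hence the identity holds at $x_0$, and since $x_0$ was arbitrary, it holds globally.

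The chief subtlety is not the calculus itself but checking that the horizontal-dual basis $\{dy_i, \delta w_\alpha\}$ really is the right intrinsic frame, so that a pointwise verification in a normal-coordinate chart captures the full tensorial identity. The horizontal lift $\delta/\delta y_i$ provides a tensorial splitting of $TX$ into horizontal and vertical parts; $\omega_{FS}$ is of purely vertical type while $\Psi$ is of purely horizontal type. The normal frame aligns the coordinate coframe with this intrinsic splitting at $y_0$, eliminating the mixed residue and making the pointwise calculation valid.
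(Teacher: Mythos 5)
The paper offers no proof of Proposition \ref{p21} at all --- it is quoted from Kobayashi \cite{Kob75,Kob96} as background --- so there is no internal argument to measure yours against; what you have written is essentially Kobayashi's own normal-frame verification, and the calculus in it is correct. In a frame with $G_{\alpha\bar\beta}(y_0)=\delta_{\alpha\beta}$ and $\partial_i G_{\alpha\bar\beta}(y_0)=0$ one indeed has $G_i(y_0,w)=G_{i\bar\beta}(y_0,w)=0$ for every $w$, hence $a^\alpha_i=0$ and $\delta w_\alpha=dw_\alpha$ at $x_0$; the mixed blocks of $i\partial\bar\partial\log G$ then vanish, the vertical block is $\omega_{FS}|_{x_0}$, and the horizontal block collapses to $i(G_{i\bar j}/G)\,dy_i\wedge d\bar y_j=-\Psi|_{x_0}$ precisely because $K_{\alpha\bar\beta i\bar j}(y_0)=-G_{\alpha\bar\beta i\bar j}(y_0)$ in such a frame. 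The appeal to the tensoriality of the horizontal/vertical splitting to legitimise a pointwise check is also the right move. The one step you assert without argument is the very first: that in the chart the weight of $\psi$ is $\log\bigl(\sum G_{\alpha\bar\beta}(y)w_\alpha\bar w_\beta\bigr)$ with $w_0=1$. Since $X=\mathbb{P}(E^{\ast})$, the fibre coordinates naturally parametrise lines in $E^{\ast}$ and the induced weight a priori involves the inverse transpose matrix $G^{\bar\beta\alpha}$ in the dual coordinates; the identification you use is the convention the paper itself adopts implicitly (all of $\Psi$, $\omega_{FS}$ and $\delta/\delta y_i$ are written in terms of $\log G$ as a function of $(y,w)$), and it is exactly where the sign of the horizontal term $-\Psi$ is decided. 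Your proof is therefore complete and faithful to the paper's conventions modulo making that identification explicit; doing so would also make transparent why Proposition \ref{p22} follows from \eqref{e21} with the signs as stated.
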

As a consequence, we have
\begin{proposition}[Kobayashi]\label{p22}
$(E,G)$ is positive in the sense of Griffiths if and only if $(\mathcal{O}_{E}(1),\psi)$ is positive.
\end{proposition}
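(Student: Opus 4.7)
The plan is to deduce Proposition \ref{p22} directly from the Kobayashi identity $i\partial\bar{\partial}\psi=-\Psi+\omega_{FS}$ provided by Proposition \ref{p21}. Since positivity of $(\mathcal{O}_{E}(1),\psi)$ means the pointwise positivity of the $(1,1)$-form $i\partial\bar{\partial}\psi$ on $X$, it suffices to evaluate the right-hand side on an arbitrary non-zero $(1,0)$-tangent vector at each point $(y,w)\in X$ and match the resulting condition with Griffiths positivity of $(E,G)$.

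The main observation is that the identity decouples along the horizontal/vertical splitting $T_{X}=\mathcal{H}\oplus T_{X/Y}$ induced by the Chern connection of $G$, with horizontal frame $\{\delta/\delta y_{i}\}$ and vertical frame $\{\partial/\partial w_{\alpha}\}$. The form $\Psi$ is built only from $dy_{i}\wedge d\bar{y}_{j}$ and $\omega_{FS}$ only from $\delta w_{\alpha}\wedge \delta\bar{w}_{\beta}$, i.e.\ the dual co-frames to these subspaces; consequently, for any decomposition $v=v_{h}+v_{v}$ all cross terms vanish and
\[
i\partial\bar{\partial}\psi(v,\bar{v})=-\Psi(v_{h},\bar{v}_{h})+\omega_{FS}(v_{v},\bar{v}_{v}).
\]
The second summand is the classical Fubini--Study form on the projective fibre $\pi^{-1}(y)\cong\mathbb{P}^{r}$ and is therefore strictly positive whenever $v_{v}\neq 0$. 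Hence positivity of $i\partial\bar{\partial}\psi$ over $X$ is equivalent to positivity of $-\Psi$ on every non-zero horizontal vector at every point of $X$.

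The core step is to identify $-\Psi$ on horizontal vectors with the Griffiths curvature form of $(E,G)$. A point $(y,w)\in X$ represents a line in $E_{y}^{\ast}$, and via the frame $\{u_{\alpha}\}$ and its dual, one may write the homogeneous coordinates as $U_{\alpha}$ with $(U_{0},\dots,U_{r})$ proportional to $(1,w_{1},\dots,w_{r})$ in the chart where $U_{0}\neq 0$. Substituting $v_{h}=\sum z_{i}\,\delta/\delta y_{i}$ into the formula for $\Psi$ yields, after absorbing the normalising denominator $G=\sum(\log G)_{\alpha\bar{\beta}}w_{\alpha}\bar{w}_{\beta}$ corresponding to $|U|_{G}^{2}$,
\[
-\Psi(v_{h},\bar{v}_{h})=\frac{1}{|U|_{G}^{2}}\sum i\Theta_{\alpha\bar{\beta}i\bar{j}}U_{\alpha}\bar{U}_{\beta}z_{i}\bar{z}_{j},
\]
using $K_{\alpha\bar{\beta}i\bar{j}}=\Theta_{\alpha\bar{\beta}i\bar{j}}$ (with the appropriate sign convention relating $E$, $E^{\ast}$ and $\mathcal{O}_{E}(1)$).

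Given this pointwise identification, both implications are immediate. If $(E,G)$ is Griffiths positive, the displayed quantity is strictly positive for every non-zero $(z_{i})$ at every $(y,[U])$, hence $-\Psi>0$ on horizontal vectors; combined with positivity of $\omega_{FS}$ in the vertical directions this gives $i\partial\bar{\partial}\psi>0$. Conversely, if $(\mathcal{O}_{E}(1),\psi)$ is positive, specialising $v$ to be horizontal at an arbitrary $(y,w)$ forces $-\Psi(v_{h},\bar{v}_{h})>0$, and since every non-zero ray $[U]\subset E_{y}$ is realised by some $w\in \pi^{-1}(y)$, Griffiths positivity of $(E,G)$ at every $y\in Y$ follows. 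The main obstacle is essentially bookkeeping: keeping the sign conventions consistent across the duality $E\leftrightsquigarrow E^{\ast}\leftrightsquigarrow\mathcal{O}_{E}(1)$ and verifying that the coefficient $w_{\alpha}\bar{w}_{\beta}/G$ is a positive scalar multiple of $U_{\alpha}\bar{U}_{\beta}/|U|_{G}^{2}$, both of which are routine given the formulas recorded before Proposition \ref{p21}.
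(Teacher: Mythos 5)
Your proposal is correct and is essentially the paper's own route: the paper states Proposition \ref{p22} purely as a consequence of the Kobayashi identity in Proposition \ref{p21}, and your argument (splitting $T_X$ into horizontal and vertical parts, using strict positivity of $\omega_{FS}$ along the fibres, and identifying $-\Psi$ on horizontal vectors with the normalised Griffiths curvature form as the point $[U]$ ranges over all directions) is exactly the standard derivation the paper leaves implicit. The only place requiring care is the sign/duality bookkeeping between $E$, $E^{\ast}$ and $\mathcal{O}_{E}(1)$, which you correctly flag and which the paper itself also leaves to the reader.
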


The following definition is useful.
\begin{definition}[\cite{CTi08,Don01,Don05}]\label{d22}
\[
c(\psi)_{i\bar{j}}:=<\frac{\delta}{\delta y_{i}},\frac{\delta}{\delta y_{j}}>_{i\partial\bar{\partial}\psi}
\]
is called the geodesic curvature of $\psi$ in the direction of $i,j$.
\end{definition}
Let $a^{\alpha}_{i}:=-\sum_{\beta}(\log G)^{\bar{\beta}\alpha}(\log G)_{\bar{\beta}i}$, we have
\[
c(\psi)_{i\bar{j}}=(\log G)_{i\bar{j}}-\sum_{\alpha}a^{\alpha}_{i}a_{\alpha\bar{j}}.
\]
Therefore the relationship between $c(\varphi)_{i\bar{j}}$ and $\Psi$ will be
\begin{equation}\label{e22}
-\Psi=\sum ic(\psi)_{i\bar{j}}dz_{i}\wedge d\bar{z}_{j}.
\end{equation}

In particular, $\{c(\psi)_{i\bar{j}}\}$ defines a Hermitian form, which is positive if and only if $\Psi$ is negative.

\subsection{Singular Hermitian metric}
We recall the definition of the singular Hermitian metric on $E$ in \cite{Wu20b}.
\begin{definition}\label{d23}
(1) Consider the $L^{1}$-bounded function $\varphi$ on $X$. We define
\[
Y_{\varphi}:=\{y\in Y;\varphi|_{X_{y}}\textrm{ is well-defined }\}.
\]

(2) Fix a smooth metric $h_{0}$ on $\mathcal{O}_{E}(1)$ as the reference metric, we define
\[
\begin{split}
\mathcal{H}(X):=&\{\varphi\in L^{1}(X);\textrm{on each }X_{y}\textrm{ with }y\in Y_{\varphi}, \varphi|_{X_{y}}\textrm{ is smooth and }\\
&(i\Theta_{\mathcal{O}_{X}(1),h_{0}}+i\partial\bar{\partial}\varphi)|_{X_{y}}\textrm{ is strictly positive }\}.
\end{split}
\]

(3) A singular Hermitian metric on $E$ is a map $H$ with form that
\[
H_{\varphi}(u,u)=\int_{X_{y}}|u|^{2}_{h_{0}}e^{-\varphi}\frac{\omega^{r}_{\varphi,y}}{r!},
\]
where $\varphi\in\mathcal{H}(X)$. Here we use the fact that $\pi_{\ast}\mathcal{O}_{E}(1)=E$.
\end{definition}

We have presented in \cite{Wu20b} that this type of singular metric differs slightly with the one defined in \cite{Rau15} and has good nature. In particular, we have successfully defined the Griffiths and Nakano positivities concerning $(E,H_{\varphi})$. More precisely,
\begin{definition}\label{d24}
Let $H_{\varphi}$ be a (singular) Hermitian metric on $E$, and let $\phi$ be the corresponding metric on $\mathcal{O}_{E}(1)$. (Notice that $\phi$ may not equal to $\varphi$.) Let $(L,h)$ be a (singular) Hermitian line bundle over $Y$. Let $q$ be a rational number. Then
\begin{enumerate}
  \item $(E,H_{\varphi})$ is (strictly) positive in the sense of Griffiths, if $i\partial\bar{\partial}\phi$ is (strictly) positive on $X$.
  \item $(E<qL>,H_{\varphi},h)$ is (strictly) positive in the sense of Griffiths, if $i\partial\bar{\partial}\phi+q\pi^{\ast}c_{1}(L,h)$ is (strictly) positive on $X$. 
  \item $(E,H_{\varphi})$ is (strictly) strongly positive in the sense of Nakano, if the $\mathbb{Q}$-twisted vector bundle
\[
(E<-\frac{1}{r+2}\det E>,H_{\varphi})
\]
is (strictly) positive in the sense of Griffiths.
  \item $(E<qL>,H_{\varphi},h)$ is (strictly) strongly positive in the sense of Nakano, if the $\mathbb{Q}$-twisted vector bundle
\[
(E<\frac{q}{r+2}L-\frac{1}{r+2}\det E>,H_{\varphi},h)
\]
is (strictly) positive in the sense of Griffiths.
\end{enumerate}
\end{definition}
The definition for a $\mathbb{Q}$-twisted bundle can be found in \cite{Laz04}. One refers to \cite{Wu20b} for a detailed discussion for these positivities. 

\subsection{Multiplier ideal sheaf}
Remember that in the line bundle situation, the multiplier ideal sheaf \cite{Nad90} is a powerful tool which establishes the relationship between the algebraic and analytic nature of this line bundle. It would be valuable to extend this notion to a vector bundle. There are several attempts. The analytic sheaf $E(H)$ is defined in \cite{Cat98} is a good candidate. The definition is as follows:
\[
E(H)_{y}:=\{u\in E_{y};\|u\|^{2}_{H,\omega}\textrm{ is integrable in some neighbourhood of }y\}
\]
for a given point $y\in Y$. Moreover, it is proved in \cite{Cat98,HoI20} that $E(H)$ is coherent \cite{Har77} when $(E,H)$ possesses certain positivity. We also prove the coherence in a general setting here.
\begin{proposition}\label{p23}
Let $(E,H)$ be a singular Hermitian vector bundle over $Y$. Let $\varphi$ be the corresponding metric on $\mathcal{O}_{E}(1)$. Assume that $i\partial\bar{\partial}\varphi\geqslant\gamma$ for a real $(1,1)$-form $\gamma$ on $X$. Then $E(H)$ is coherent.
\begin{proof}
The proof follows the same method as \cite{Cat98,Dem12}.

Since coherence is a local property, we can assume without loss of generality that $Y=U$ is a domain in $(\mathbb{C}^{n},z=(z_{1},...,z_{n}))$ and $E=U\times\mathbb{C}^{r+1}$. Let $L^{2}(U,\mathbb{C}^{r+1})_{H}$ be the square integrable $\mathbb{C}^{r+1}$-valued holomorphic functions with respect to $H$ on $U$. It generates a coherent subsheaf 
\[
\mathcal{F}\subset\mathcal{O}_{U}(E)=\underbrace{\mathcal{O}_{U}\oplus\cdots\oplus\mathcal{O}_{U}}_{r+1}
\]
as an $\mathcal{O}_{U}$-module \cite{Har77}. It is clear that $\mathcal{F}\subset E(H)$; in order to prove the equality, we need only check that $\mathcal{F}_{y}+E(H)_{y}\cdot\mathfrak{m}^{s+1}_{U,y}=E(H)_{y}$ for every integer $s$, in view of Nakayama's lemma \cite{AtM69}. Here $\mathfrak{m}_{U,y}$ is the maximal ideal of $\mathcal{O}_{U,y}$. 

Let $f\in E(H)_{y}$ be a germ that is defined in a neighbourhood $V$ of $y$ and let $\rho$ be a cut-off function with support in $V$ such that $\rho=1$ in a smaller neighbourhood of $y$. We solve the equation $\bar{\partial}u=\bar{\partial}(\rho f)$ by means of $L^{2}$-estimates (see the proof of Proposition \ref{p24}) against $e^{-2(n+s)\log|z-z(y)|-\psi}H$, where $\psi$ is a smooth strictly plurisubharmonic function satisfying certain positivity as is shown in the proof of Proposition \ref{p24}. Notice that here we use the curvature condition $i\partial\bar{\partial}\varphi\geqslant\gamma$ to guarantee the existence of such a $\psi$. We then get a solution $u$ such that $\int_{U}\frac{|u|^{2}_{H}}{|z-z(y)|^{2(n+s)}}<\infty$. Thus $F=\rho f-u$ is
holomorphic, $F\in L^{2}(U,\mathbb{C}^{r+1})_{H}$ and 
\[
f_{y}-F_{y}=u_{y}\in E(H)_{y}\cdot\mathfrak{m}^{s+1}_{U,y}.
\]
This proves the equality hence the coherence.
\end{proof}
\end{proposition}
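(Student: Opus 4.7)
The plan is to mimic the classical argument for Nadel's multiplier ideal sheaf; the only nontrivial point is the $L^{2}$ existence theorem for $\bar{\partial}$ in the vector-bundle setting with a singular metric. Since coherence is local, I would reduce to the case $Y=U$, a small ball in $\mathbb{C}^{n}$, with $E$ trivialised as $U\times\mathbb{C}^{r+1}$. Consider the $\mathcal{O}_{U}$-subsheaf $\mathcal{F}\subset\mathcal{O}_{U}(E)$ generated by the global holomorphic $L^{2}(U,\mathbb{C}^{r+1})_{H}$ sections; by the strong Noetherian property of coherent analytic sheaves, $\mathcal{F}$ is coherent. The inclusion $\mathcal{F}\subset E(H)$ is tautological, and Nakayama's lemma reduces the opposite inclusion to the verification that $\mathcal{F}_{y}+E(H)_{y}\cdot\mathfrak{m}_{U,y}^{s+1}=E(H)_{y}$ for every $y\in U$ and every integer $s\geqslant 0$.

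Given a germ $f\in E(H)_{y}$ extended to a neighbourhood $V$, I would multiply by a cut-off $\rho$ equal to $1$ near $y$ and solve $\bar{\partial}u=\bar{\partial}(\rho f)$ in $L^{2}$ against the weighted metric $e^{-2(n+s)\log|z-z(y)|-\psi}H$, where $\psi$ is an auxiliary smooth strictly plurisubharmonic function. If this solution exists, integrability of $|u|_{H}^{2}/|z-z(y)|^{2(n+s)}$ near $y$ automatically forces $u_{y}\in E(H)_{y}\cdot\mathfrak{m}_{U,y}^{s+1}$, while $F:=\rho f-u$ is holomorphic on $U$, lies in $L^{2}(U,\mathbb{C}^{r+1})_{H}$, and so $F_{y}\in\mathcal{F}_{y}$. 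The decomposition $f_{y}=F_{y}+u_{y}$ then delivers what Nakayama requires.

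The real obstacle is producing the $L^{2}$ estimate itself, since $H$ is merely singular and we have no a priori Nakano lower bound on its curvature. This is precisely where the hypothesis $i\partial\bar{\partial}\varphi\geqslant\gamma$ enters: it says that the singular line-bundle metric on $\mathcal{O}_{E}(1)$ associated with $H$ has a smooth curvature lower bound on $X$. The natural strategy is therefore to lift the problem to $X$ via the identification $\pi_{\ast}\mathcal{O}_{E}(1)=E$, carry out a standard Demailly--H\"ormander $L^{2}$ estimate for $\mathcal{O}_{E}(1)$ twisted by $\pi^{\ast}$ of the weight $2(n+s)\log|z-z(y)|+\psi$, and push the solution back down along the fibres of $\pi$. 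The auxiliary $\psi$ is chosen large enough to absorb both the singularity of $-\log|z-z(y)|$ and the negative part of $\gamma$, so that the twisted weight is strictly plurisubharmonic on $X$ and the classical estimate applies. Once this $\bar{\partial}$-solution is in hand, the Nakayama step above closes the proof.
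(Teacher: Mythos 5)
Your proposal follows exactly the paper's skeleton: local reduction, the coherent subsheaf $\mathcal{F}$ generated by $L^{2}(U,\mathbb{C}^{r+1})_{H}$, the Nakayama reduction to $\mathcal{F}_{y}+E(H)_{y}\cdot\mathfrak{m}^{s+1}_{U,y}=E(H)_{y}$, and the cut-off/$\bar{\partial}$-equation against the weight $e^{-2(n+s)\log|z-z(y)|-\psi}H$. The one place where you genuinely diverge is the justification of the $L^{2}$ existence theorem itself. The paper stays on the base: it invokes the regularisation of Proposition \ref{p11} to replace $H$ by smooth metrics $H_{\varepsilon}$ on the Stein set, uses the Stein property to choose $\psi$ so that $(E,e^{-\psi}H)$ is strictly strongly Nakano positive, applies the classical H\"ormander--Demailly estimate to $e^{-\psi}H_{\varepsilon}$, and passes to the limit in $\varepsilon$ (this is the argument spelled out inside the proof of Proposition \ref{p24}, to which Proposition \ref{p23} defers). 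You instead propose to lift to $X=\mathbb{P}(E^{\ast})$, solve $\bar{\partial}$ for the line bundle $\mathcal{O}_{E}(1)$ twisted by the pulled-back weight, and push the solution down along $\pi$. That route is viable and arguably more self-contained --- it avoids the regularising sequence entirely and uses only the stated hypothesis $i\partial\bar{\partial}\varphi\geqslant\gamma$ together with the fibrewise strict positivity built into Definition \ref{d23} --- but it requires bookkeeping you have not supplied: on $X$ one must work with $(n+r,q)$-forms, so the relevant line bundle is $\mathcal{O}_{E}(r+1)\otimes\pi^{\ast}\det E^{\ast}$ (absorbing $K_{X/Y}$), its positivity in the fibre directions must come from $\varphi\in\mathcal{H}(X)$ rather than from $\pi^{\ast}\psi$ (which is degenerate along the fibres), and one must check that fibre integration intertwines both $\bar{\partial}$ and the $L^{2}$-norms, the latter being exactly the definition of $H_{\varphi}$ as a fibre integral against $e^{-\varphi}\omega^{r}_{\varphi,y}/r!$. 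The paper's regularisation route buys a statement that plugs directly into the smooth Nakano-type estimate; your lifting route buys independence from Proposition \ref{p11} at the cost of this extra verification.
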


\subsection{de Rham--Weil isomorphism}
The de Rham--Weil isomorphism about the $L^{2}$-cohomology group is essentially used in this paper, so we provide here a detailed proof. This proof includes an $L^{2}$-estimate for a singular Hermitian vector bundle, which seems to be of independent interest. The readers who are familiar with this isomorphism could skip this part. Assume that $E(H)$ is coherent, hence the cohomology group $H^{q}(Y,K_{Y}\otimes E(H))$ is well-defined \cite{GrH78}. Moreover, we have
\begin{proposition}[The de Rham--Weil isomorphism]\label{p24}
Fix a Stein covering $\mathcal{U}:=\{U_{j}\}^{N}_{j=1}$ of $Y$. We have
\begin{equation}\label{e23}
H^{q}(Y,K_{Y}\otimes E(H))\simeq\frac{\mathrm{Ker}(\bar{\partial}:A^{n,q}(Y,E(H))\rightarrow A^{n,q+1}(Y,E(H)))}{\mathrm{Im}(\bar{\partial}:A^{n,q-1}(Y,E(H))\rightarrow A^{n,q}(Y,E(H)))}.
\end{equation}
Here we use $A^{n,q}(U,E(H))$ to refer to all of the $(n,q)$-forms on an open set $U$ whose coefficients are in $E(H)$. In other words, 
\[
\alpha\in A^{n,q}(U,E(H))
\] 
if $\alpha$ is an $E$-valued $(n,q)$-form such that
\[
\int_{U}\|\alpha\|^{2}_{H,\omega}<\infty.
\]
\begin{proof}
The proof follows the same method as \cite{Nad90} except that we will essentially use the Koszul complex without mentioning it. Fix a Stein covering $\mathcal{U}:=\{U_{j}\}^{N}_{j=1}$ of $Y$. Then we have the isomorphism between $H^{q}(Y,K_{Y}\otimes E(H))$ and the \v{C}ech cohomology group $\check{H}^{q}(\mathcal{U},K_{Y}\otimes E(H))$. For simplicity, we put
\[
U_{j_{0}j_{1}...j_{q}}:=U_{j_{0}}\cap\cdots\cap U_{j_{q}}.
\]
The class $[\alpha]$ maps to a $q$-cocycle $\alpha=\{\alpha_{j_{0}...j_{q}}\}$ that satisfies
\[
\alpha_{j_{0}...j_{q}}\in H^{n,0}(U_{j_{0}...j_{q}},E(H))\textrm{ and }\delta\alpha=0,
\]
where $\delta$ is the coboundary operator of the \v{C}ech complex. Then by using a partition $\{\rho_{j}\}$ of unity associated to $\mathcal{U}$, we define $\alpha^{1}:=\{\alpha_{j_{0}...j_{q-1}}\}$ by $\alpha_{j_{0}...j_{q-1}}:=\sum\rho_{j}\alpha_{jj_{0}...j_{q-1}}$. By the construction, we have $\delta\alpha^{1}=\alpha$ and $\delta\bar{\partial}\alpha^{1}=\bar{\partial}\delta\alpha^{1}=\bar{\partial}\alpha=0$. Notice that
\[
\bar{\partial}\alpha^{1}=\{\bar{\partial}\alpha_{j_{0}...j_{q-1}}\}=\{\sum(\bar{\partial}\rho_{j})\cdot\alpha_{jj_{0}...j_{q-1}}\},
\]
we have that $\bar{\partial}\alpha^{1}$ is a $(q-1)$-cocycle with
\[
\bar{\partial}\alpha_{j_{0}...j_{q-1}}\in A^{n,1}(U_{j_{0}...j_{q-1}},E(H)).
\]

From the same argument, we can obtain $\alpha^{2}$ with $\delta\alpha^{2}=\bar{\partial}\alpha^{1}$. Moreover, $\bar{\partial}\alpha^{2}$ is a $(q-2)$-cocycle that satisfies
\[
\bar{\partial}\alpha_{j_{0}...j_{q-2}}\in A^{n,2}(U_{j_{0}...j_{q-2}},E(H))\textrm{ and }\delta\bar{\partial}\alpha^{2}=0.
\]
By repeating this process, we finally obtain $\alpha^{q}=\{\alpha_{j_{0}}\}$. Then $\bar{\partial}\alpha_{j_{0}}$ determines the $(n,q)$-form on $U_{j_{0}}$ with $\bar{\partial}\alpha_{j_{0}}\in A^{n,q}(U_{j_{0}},E(H))$. Moreover, since $\delta\bar{\partial}\alpha_{j_{0}}=0$, they patch together to give a $(0,q)$-form 
\[
\alpha_{q}\in A^{n,q}(X,E(H))
\] 
with $\bar{\partial}\alpha_{q}=0$. From the argument above, we have obtained the (well-defined) map
\[
\check{H}^{q}(\mathcal{U},E(H))\rightarrow\frac{\mathrm{Ker}(\bar {\partial}:A^{n,q}(X,E(H))\rightarrow A^{n,q+1}(X,E(H)))}{\mathrm{Im}(\bar{\partial}:A^{n,q-1}(X,E(H))\rightarrow A^{n,q}(X,E(H)))}.
\]

Now we see that this map is actually an isomorphism by using the $L^{2}$-estimate on each Stein subset $U_{j_{0}...j_{k}}$ for $k=0,...,q$. For every
\[
\beta\in\mathrm{Ker}(\bar {\partial}:A^{n,q}(X,E(H))\rightarrow A^{n,q+1}(X,E(H))),
\]
we define $\beta^{0}:=\{\beta_{j_{0}}\}$ by $\beta_{j_{0}}:=\beta|_{U_{j_{0}}}$. By the $L^{2}$-estimate on $U_{j_{0}}$ against $H$, we obtain $\beta^{1}=\{\beta^{1}_{j_{0}}\}$ such that
\[
\begin{split}
\bar{\partial}\beta^{1}&=\beta^{0},\\
\|\beta^{1}\|^{2}_{H_{U_{j_{0}}}}:&=\sum_{j_{0}}\int_{U_{j_{0}}}\|\beta^{1}_{j_{0}}\|^{2}_{H}\leqslant C_{1}\|\beta^{0}\|^{2}_{H_{U_{j_{0}}}}.
\end{split}
\]
Here $C_{1}$ is an independent constant.

We give a short explanation for this $L^{2}$-estimate. We admit Proposition \ref{p11} for the time being. Then there exits a regularising sequence $\{H_{\varepsilon}\}$ which is smooth on an open subvariety $Y^{\prime}\subset U_{j_{0}}$. Moreover, since $U_{j_{0}}$ is a Stein open subset of $\mathbb{C}^{n}$, we can even make $Y^{\prime}=U_{j_{0}}$. Then, at each point $y\in U_{j_{0}}$, we may then choose a coordinate system which diagonalizes simultaneously the hermitians forms $\omega(y)$ and $i\Theta_{E,H_{\varepsilon}}$, in such a way that
\[
\omega(y)=i\sum dy_{j}\wedge d\bar{y}_{j}\textrm{ and }i\Theta_{E,H_{\varepsilon}}=i\sum\lambda^{\varepsilon}_{j}dy_{j}\wedge d\bar{y}_{j}.
\]
Since $\beta_{0}$ is an $(n,q)$-form, at point $y$ we have 
\[
<[i\Theta_{E,H_{\varepsilon}},\Lambda]\beta_{0},\beta_{0}>_{\omega}\geqslant(\lambda^{\varepsilon}_{1}+\cdots+\lambda^{\varepsilon}_{q})|\beta_{0}|^{2},
\]
where $\Lambda$ is the adjoint operator of $\omega\wedge\cdot$. On the other hand, since $U_{j_{0}}$ is Stein, we can always find a smooth strictly plurisubharmonic function $\psi$, such that $(E,e^{-\psi}H)$ is strictly strongly positive in the sense of Nakano on $U_{j_{0}}$. By Proposition \ref{p11}, (e), we then obtain that
\[
i\Theta_{E,e^{-\psi}H_{\varepsilon}}=i\sum\tau^{\varepsilon}_{j}dz_{j}\wedge d\bar{z}_{j}
\]
such that $\tau^{\varepsilon}_{j}\geqslant C^{\prime}$ for a universal positive constant $C^{\prime}$. Now apply the $L^{2}$-estimate \cite{Dem12} against $e^{-\psi}H_{\varepsilon}$, we obtain $\beta^{1}=\{\beta^{1}_{j_{0}}\}$ such that
\[
\begin{split}
\bar{\partial}\beta^{1}&=\beta^{0},\\
\|\beta^{1}\|^{2}_{H_{\varepsilon,U_{j_{0}}}}:&=\sum_{j_{0}}\int_{U_{j_{0}}}\|\beta^{1}_{j_{0}}\|^{2}_{H_{\varepsilon}}\leqslant C_{1}\|\beta^{0}\|^{2}_{H_{\varepsilon,U_{j_{0}}}}.
\end{split}
\]
Recall that $\beta^{0}=\{\beta|_{U_{j_{0}}}\}$ with $\beta\in A^{n,q}(X,E(H))$, so
\[
\lim_{\varepsilon\rightarrow0}\|\beta^{0}\|^{2}_{H_{\varepsilon,U_{j_{0}}}}<\infty.
\] 
Take the limit of the inequality before with respect to $\varepsilon$, we then obtain the desired estimate. 
 
By the construction, we have $\bar{\partial}\delta\beta^{1}=\delta\bar{\partial}\beta^{1}=\delta\beta^{0}=0$. Moreover,
\[
\delta\beta^{1}=\{\beta^{1}_{j_{0}}|_{U_{j_{0}j_{1}}}-\beta^{1}_{j_{1}}|_{U_{j_{0}j_{1}}}\}\in A^{n,q-1}(U_{j_{0}j_{1}},E(H)).
\]
The explicit meaning of this inclusion should be
\[
\beta^{1}_{j_{0}}|_{U_{j_{0}j_{1}}}-\beta^{1}_{j_{1}}|_{U_{j_{0}j_{1}}}\in A^{n,q-1}(U_{j_{0}j_{1}},E(H)),
\]
but we abuse the notation here and in the rest part. Therefore by the same method, we can obtain $\beta^{2}=\{\beta^{2}_{j_{0}j_{1}}\}$ such that
\[
\begin{split}
\bar{\partial}\beta^{2}&=\delta\beta^{1},\\
\|\beta^{2}\|^{2}_{H_{U_{j_{0}j_{1}}}}:&=\sum_{j_{0},j_{1}}\int_{U_{j_{0}j_{1}}}\|\beta^{1}_{j_{0}j_{1}}\|^{2}_{H}\leqslant C_{2}\|\beta^{1}\|^{2}_{H_{U_{j_{0}}}}.
\end{split}
\]
Similarly, $\bar{\partial}\delta\beta^{2}=\delta\bar{\partial}\beta^{2}=\delta\delta\beta^{1}=0$ and
\[
\delta\beta^{2}\in A^{n,q-2}(U_{j_{0}j_{1}j_{2}},E(H)).
\]
By repeating this process, we finally obtain $\beta^{i}$ such that $\bar{\partial}\beta^{i}=\delta\beta^{i-1}$ and $\delta\beta^{i}\in A^{n,0}(U_{j_{0}...j_{i}},E(H))$. Since $\bar{\partial}\delta\beta^{i}=\delta\bar{\partial}\beta^{i}=\delta\delta\beta^{i-1}=0$, we actually have $\delta\beta^{i}\in H^{n,0}(U_{j_{0}...j_{i}},E(H))$. Hence we have obtained
\[
j:\mathrm{Ker}(\bar{\partial}:A^{n,q}(Y,E(H)))\rightarrow A^{n,q+1}(Y,E(H)))\rightarrow\check{H}^{q}(\mathcal{U},K_{Y}\otimes E(H)).
\]
We claim that $\mathrm{Im}(\bar{\partial}:A^{n,q-1}(Y,E(H))\rightarrow A^{n,q}(Y,E(H)))$ maps to the zero space under $j$, hence it is easy to see that the maps $i,j$ together give an isomorphism
\[
H^{q}(Y,K_{Y}\otimes E(H))\simeq\frac{\mathrm{Ker}(\bar{\partial}:A^{n,q}(Y,E(H))\rightarrow A^{n,q+1}(Y,E(H)))}{\mathrm{Im}(\bar{\partial}:A^{n,q-1}(Y,E(H))\rightarrow A^{n,q}(Y,E(H)))}.
\]

Now we prove the claim by diagram chasing. We only prove the case that $i=1$ and $i=2$, the general follows the same method. For every $\beta\in\mathrm{Im}(\bar{\partial}:A^{n,0}(Y,E(H))\rightarrow A^{n,1}(Y,E(H)))$, we define $\beta^{0}:=\{\beta_{j_{0}}\}$ by $\beta_{j_{0}}:=\beta|_{U_{j_{0}}}$. On the other hand, there exits an $E$-valued $(n,0)$-form $\gamma$ on $Y$ such that $\beta=\bar{\partial}\gamma$ and $\|\gamma\|^{2}_{H_{U_{j_{0}}}}<\infty$ for every $j_{0}$. So we can take $\beta^{1}=\{\beta^{1}_{j_{0}}\}:=\{\gamma|_{U_{j_{0}}}\}$. The morphism $j$ in this situation can be simply written as $j(\beta)=\delta\beta^{1}$.  Since $\gamma$ is globally defined hence $\delta\beta^{1}=0$. We have successfully proved that $j(\beta)=0$ when $i=1$.

Next we consider the $\beta\in\mathrm{Im}(\bar{\partial}:A^{n,1}(Y,E(H))\rightarrow A^{n,2}(Y,E(H)))$, we define $\beta^{0}:=\{\beta_{j_{0}}\}$ by $\beta_{j_{0}}:=\beta|_{U_{j_{0}}}$. On the other hand, there exits an $E$-valued $(n,1)$-form $\gamma$ on $Y$ such that $\beta=\bar{\partial}\gamma$ and $\|\gamma\|^{2}_{H_{U_{j_{1}}}}<\infty$ for every $j_{0}$. So we can take $\beta^{1}=\{\beta^{1}_{j_{0}}\}:=\{\gamma|_{U_{j_{0}}}\}$. Now apply the $L^{2}$-estimate on $U_{j_{0}}$ against $H$, we obtain $\gamma^{1}=\{\gamma^{1}_{j_{0}}\}$ such that
\[
\begin{split}
\bar{\partial}\gamma^{1}&=\beta^{1},\\
\|\gamma^{1}\|^{2}_{H_{U_{j_{0}}}}:&=\sum_{j_{0}}\int_{U_{j_{0}}}\|\gamma^{1}_{j_{0}}\|^{2}_{H}\leqslant C_{1}\|\beta^{1}\|^{2}_{H_{U_{j_{0}}}}.
\end{split}
\]
Let $\beta^{2}=\{\beta^{2}_{j_{0}j_{1}}\}=\delta\gamma^{1}$. Since $\bar{\partial}\beta^{2}=\bar{\partial}\delta\gamma^{1}=\delta\bar{\partial}\gamma^{1}=\delta\beta^{1}$ and $\|\beta^{2}\|^{2}_{H_{U_{j_{0}j_{1}}}}<\infty$, the morphism $j$ is $j(\beta)=\delta\beta^{2}=\delta\delta\gamma^{1}=0$ at this time. We have successfully proved that $j(\beta)=0$ when $i=2$.
\end{proof}
\end{proposition}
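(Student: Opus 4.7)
The plan is to combine Leray's theorem with an inductive partition-of-unity/$L^2$-solving dance that is standard in the smooth case, while handling the singular weight $H$ via a suitable $L^{2}$-existence result on Stein opens.

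First, since $E(H)$ is coherent by Proposition \ref{p23}, so is $K_{Y}\otimes E(H)$, and the Stein cover $\mathcal{U}=\{U_{j}\}$ is Leray. Thus I would identify $H^{q}(Y,K_{Y}\otimes E(H))$ with $\check{H}^{q}(\mathcal{U},K_{Y}\otimes E(H))$ and construct two maps between this \v{C}ech group and the $L^{2}$-Dolbeault quotient. For the map $i$ from \v{C}ech to $L^{2}$-Dolbeault, given a $q$-cocycle $\{\alpha_{j_{0}\dots j_{q}}\}$ with values in $K_{Y}\otimes E(H)$, choose a partition of unity $\{\rho_{j}\}$ subordinate to $\mathcal{U}$ and define $\alpha^{1}_{j_{0}\dots j_{q-1}}=\sum_{j}\rho_{j}\alpha_{jj_{0}\dots j_{q-1}}$; then $\delta\alpha^{1}=\alpha$ and $\bar\partial\alpha^{1}$ is a $\delta$-cocycle. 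Iterating $q$ times produces $\alpha^{q}=\{\alpha_{j_{0}}\}$ whose $\bar\partial$'s patch to a global $\bar\partial$-closed form in $A^{n,q}(Y,E(H))$ (the $L^{2}$ condition is automatic because the $\alpha_{j_{0}\dots j_{k}}$ are holomorphic with respect to $H$ and the $\rho_{j}$ are compactly supported smooth).

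For the map $j$ in the reverse direction, I would take $\beta\in\ker\bar\partial\subset A^{n,q}(Y,E(H))$, restrict it as $\beta^{0}=\{\beta|_{U_{j_{0}}}\}$, solve $\bar\partial\beta^{1}_{j_{0}}=\beta|_{U_{j_{0}}}$ on each $U_{j_{0}}$ with an $L^{2}$-bound against $H$, and then iterate: $\delta\beta^{1}$ is $\bar\partial$-closed on the double intersections, solve $\bar\partial\beta^{2}=\delta\beta^{1}$, etc.; after $q$ steps, $\beta^{q}$ is an $E(H)$-valued holomorphic section on the $(q+1)$-fold intersections, i.e.\ a \v{C}ech $q$-cocycle. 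A straightforward diagram chase (at each level using that $\gamma$ with $\bar\partial\gamma=\beta$ already exists globally) then shows that $\bar\partial$-exact classes go to zero under $j$, and an analogous chase confirms $i$ and $j$ are mutual inverses at the level of cohomology.

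The main obstacle, and the reason this argument is not merely a copy of the smooth case, is producing the $L^{2}$-solvability of $\bar\partial$ on a Stein $U_{j_{0}}$ against the singular metric $H$. The strategy is: invoke Proposition \ref{p11} to obtain a decreasing regularization $\{H_{\varepsilon}\}$ that is smooth on an open $Y^{\prime}$ (which on a Stein open in $\mathbb{C}^{n}$ we may arrange to be all of $U_{j_{0}}$), twist by $e^{-\psi}$ for a smooth strictly plurisubharmonic exhaustion $\psi$ on $U_{j_{0}}$ so that $(E,e^{-\psi}H_{\varepsilon})$ becomes strictly strongly Nakano positive with a uniform lower bound (Proposition \ref{p11}(e) gives $i\Theta\geqslant-C\varepsilon\omega\otimes\mathrm{Id}$, absorbed by $i\partial\bar\partial\psi$ for small $\varepsilon$), and then apply the standard Nakano-type $L^{2}$-estimate for $\bar\partial$ to obtain a solution with an $\varepsilon$-uniform bound $\|\beta^{1}\|^{2}_{H_{\varepsilon}}\leqslant C\|\beta^{0}\|^{2}_{H_{\varepsilon}}$. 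Since $H_{\varepsilon}\nearrow H$ monotonically, Fatou/monotone convergence on both sides produces the desired estimate in the limit; a diagonal or weak-compactness extraction yields an honest $L^{2}$-solution against $H$ itself. Once this key analytic input is in hand, the rest of the argument is formal.
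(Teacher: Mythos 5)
Your proposal follows essentially the same route as the paper's own proof: Leray/\v{C}ech identification, the partition-of-unity map $i$, the reverse map $j$ built from $L^{2}$-solvability of $\bar{\partial}$ on Stein opens obtained by regularizing $H$ via Proposition \ref{p11}, twisting by a strictly plurisubharmonic $\psi$ to restore Nakano positivity, and passing to the limit in $\varepsilon$, followed by a diagram chase. Your remark about extracting an honest solution against $H$ itself by monotone convergence plus weak compactness is in fact slightly more careful than the paper's one-line limit argument, but the substance is identical.
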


\subsection{Bochner technique}
This subsection is devoted to introduce the classic Bochner technique in harmonic theory. It mainly comes from \cite{Dem12,GrH78,Tak95} and the references therein. One should pay attention that the content of this subsection works for a K\"{a}hler manifold $(Y,\omega)$ that is not necessary to be compact. 

Let $(E,G)$ be a holomorphic vector bundle with the smooth Hermitian metric $G$. Then $G$ as well as $\omega$ defines an $L^{2}$-norm $\|\cdot\|_{G,\omega}$ on the space 
\[
A^{\ast}(Y,E)=\oplus A^{p,q}(Y,E).
\] 
Let $\ast$ be the Hodge $\ast$-operator, let $e(\theta)$ be the left wedge product acting by the form $\theta\in A^{p,q}(Y)$ and Let $\partial_{G}$ be the $(1,0)$-part of the Chern connection on $E$ associated with $G$. Then the adjoint operators of $\bar{\partial},\partial_{G},e(\theta)$ with respect to $\|\cdot\|_{G,\omega}$ are denoted by $\bar{\partial}^{\ast},\partial^{\ast}_{G}$ and $e(\theta)^{\ast}$, respectively. In particular, $e(\omega)$ is also denoted by $L$ and $e(\omega)^{\ast}$ is denoted by $\Lambda$. The Laplacian operators are defined as follows:
\[
\begin{split}
\Box_{G}&=\bar{\partial}\bar{\partial}^{\ast}+\bar{\partial}^{\ast}\bar{\partial},\\
\bar{\Box}_{G}&=\partial_{G}\partial^{\ast}_{G}+\partial^{\ast}_{G}\partial_{G}.
\end{split}
\]

Then the Bochner formula is as follows:
\begin{proposition}\label{p25}
For any $\alpha\in A^{p,q}(Y,E)$ and positive smooth function $\eta$ on $Y$ with $\chi:=\log\eta$, we have
\begin{equation}\label{e24}
\begin{split}
&\Box_{G}=\bar{\Box}_{G}+[i\Theta_{E,G},\Lambda],\\
&\Box_{e^{-\chi}G}=\bar{\Box}_{e^{-\chi}G}+[i(\Theta_{E,G}+\partial\bar{\partial}\chi),\Lambda]\textrm{ and }\\
&\|\sqrt{\eta}(\bar{\partial}+e(\bar{\partial}\chi))\alpha\|^{2}_{G,\omega}+\|\sqrt{\eta}\bar{\partial}^{\ast}\alpha\|^{2}_{G,\omega}=\|\sqrt{\eta}(\partial^{\ast}_{G}-e(\partial\chi)^{\ast})\alpha)\|^{2}_{G,\omega}\\
&+\|\sqrt{\eta}\partial_{G}\alpha\|^{2}_{G,\omega}+<\eta i[\Theta_{E,G}+\partial\bar{\partial}\varphi,\Lambda]\alpha,\alpha>_{G,\omega},
\end{split}
\end{equation}
when the integrals above are finite.
\end{proposition}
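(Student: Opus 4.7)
The plan is to establish the three identities in order: (i) the first is the classical Bochner--Kodaira--Nakano identity on a Hermitian vector bundle over a K\"{a}hler manifold, (ii) the second is (i) applied to a rescaled metric, and (iii) the third is a gauge-transformed integrated form of (ii).

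First I would derive (i) from the K\"{a}hler identities for a Hermitian vector bundle,
\[
[\Lambda,\bar\partial]=-i\partial_{G}^{\ast},\qquad [\Lambda,\partial_{G}]=i\bar\partial^{\ast},
\]
which hold because $\omega$ is K\"{a}hler and the Chern connection $D=\partial_{G}+\bar\partial$ is torsion free. Combining these with the fact that $\partial_{G}\bar\partial+\bar\partial\partial_{G}$ equals the Chern curvature endomorphism $\Theta_{E,G}$ (from $D^{2}=\Theta_{E,G}$ and $\partial_{G}^{2}=\bar\partial^{2}=0$), a direct expansion of $\bar\partial\bar\partial^{\ast}+\bar\partial^{\ast}\bar\partial$ and $\partial_{G}\partial_{G}^{\ast}+\partial_{G}^{\ast}\partial_{G}$ produces $\Box_{G}-\bar{\Box}_{G}=[i\Theta_{E,G},\Lambda]$. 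Identity (ii) is then immediate from (i) applied to the metric $e^{-\chi}G$: rescaling by the scalar weight $e^{-\chi}$ shifts the local Chern form by $-\partial\chi\cdot\mathrm{Id}_{E}$, hence the curvature by $+\partial\bar\partial\chi$, giving $\Theta_{E,e^{-\chi}G}=\Theta_{E,G}+\partial\bar\partial\chi\cdot\mathrm{Id}_{E}$.

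For (iii), the key observation I would exploit is the gauge substitution $\beta:=e^{\chi}\alpha$, which converts the weighted $G$-norm into the unweighted $L^{2}$-norm in the metric $e^{-\chi}G$:
\[
|\beta|^{2}_{e^{-\chi}G}=e^{-\chi}|e^{\chi}\alpha|^{2}_{G}=\eta|\alpha|^{2}_{G}.
\]
A termwise local computation gives
\[
\bar\partial\beta=e^{\chi}(\bar\partial+e(\bar\partial\chi))\alpha,\qquad \partial_{e^{-\chi}G}\beta=e^{\chi}\partial_{G}\alpha,
\]
the second equality because the scalar correction $-\partial\chi\cdot\mathrm{Id}_{E}$ inside $\partial_{e^{-\chi}G}=\partial_{G}-\partial\chi$ exactly cancels the $e^{\chi}\partial\chi$ produced by the product rule. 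Computing the $e^{-\chi}G$-adjoints by integration by parts --- justified under the standing hypothesis that all integrals in question are finite --- then yields
\[
\bar\partial^{\ast}_{e^{-\chi}G}\beta=e^{\chi}\bar\partial^{\ast}\alpha,\qquad \partial^{\ast}_{e^{-\chi}G}\beta=e^{\chi}(\partial_{G}^{\ast}-e(\partial\chi)^{\ast})\alpha.
\]
Taking the inner product of identity (ii) applied to $\beta$ with $\beta$ in the $e^{-\chi}G$-metric converts each squared term on the left to one of the four weighted-norm terms in (iii) on the right, and the curvature term becomes $\langle\eta\,i[\Theta_{E,G}+\partial\bar\partial\chi,\Lambda]\alpha,\alpha\rangle_{G,\omega}$, establishing (iii).

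The main technical obstacle I anticipate is verifying the adjoint identifications $\bar\partial^{\ast}_{e^{-\chi}G}\beta=e^{\chi}\bar\partial^{\ast}\alpha$ and $\partial^{\ast}_{e^{-\chi}G}\beta=e^{\chi}(\partial_{G}^{\ast}-e(\partial\chi)^{\ast})\alpha$. These can be obtained either by direct integration by parts against compactly supported test forms, or by re-applying the K\"{a}hler identities inside the rescaled bundle and then gauge-conjugating back; both routes require careful sign bookkeeping but are otherwise routine. A secondary caveat is that $Y$ need not be compact, so the integration by parts underlying (i) and (iii) must first be performed for compactly supported $\alpha$ and then extended by a cut-off argument to the class of $\alpha$ for which all norms are finite, which is standard.
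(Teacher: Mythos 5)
Your proposal is correct and is precisely the standard derivation: the Bochner--Kodaira--Nakano identity from the K\"ahler commutation relations, the conformal rescaling $G\mapsto e^{-\chi}G$, and the gauge substitution $\beta=e^{\chi}\alpha$ to produce the twisted integrated identity (your adjoint identifications $\bar\partial^{\ast}_{e^{-\chi}G}\beta=e^{\chi}\bar\partial^{\ast}\alpha$ and $\partial^{\ast}_{e^{-\chi}G}\beta=e^{\chi}(\partial_{G}^{\ast}-e(\partial\chi)^{\ast})\alpha$ check out, and you correctly read $\partial\bar\partial\varphi$ in the statement as a typo for $\partial\bar\partial\chi$). The paper gives no proof of this proposition, citing it as classical from \cite{Dem12,GrH78,Tak95}, and your argument is exactly the one found in those references.
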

The following formula is due to the K\"{a}hler property of $\omega$:
\begin{proposition}[Donnelly and Xavier's formula, \cite{DoX84}]\label{p26}
For any 
\[
\alpha\in A^{n,q}(Y,E)
\] 
and smooth function $\chi$ on $Y$, we have
\begin{equation}\label{e25}
\begin{split}
[\bar{\partial},e(\bar{\partial}\chi)^{\ast}]&+e(\partial\chi)\partial^{\ast}_{G}=ie(\partial\bar{\partial}\varphi)\Lambda\textrm{ and }\\
\|e(\partial\chi)^{\ast}\alpha\|^{2}_{G,\omega}&=\|e(\bar{\partial}\chi)\alpha\|^{2}_{G,\omega}+\|e(\bar{\partial}\chi)^{\ast}\alpha\|^{2}_{G,\omega}.
\end{split}
\end{equation}
when the integrals above are finite.
\end{proposition}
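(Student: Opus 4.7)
The plan is to derive both identities from the standard Kähler identities on $(Y,\omega)$, adapted to the $E$-valued Dolbeault complex by replacing $\partial$ with the Chern $(1,0)$-connection $\partial_G$. Throughout I would assume $\chi$ is real-valued (which is the relevant case, e.g.\ $\chi=\log\eta$ in Proposition \ref{p25}), so that $\overline{\partial\chi}=\bar\partial\chi$ pointwise. The two assertions are essentially independent: the first is a commutator identity of differential operators, the second is a purely pointwise algebraic relation integrated against $\|\cdot\|_{G,\omega}$.

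For the first identity, I would start from the elementary derivations $[\bar\partial,m_\chi]=e(\bar\partial\chi)$ and $[\partial_G,m_\chi]=e(\partial\chi)$, where $m_\chi$ is multiplication by $\chi$. Taking formal adjoints with respect to $\|\cdot\|_{G,\omega}$ and using that $m_\chi$ is self-adjoint (as $\chi$ is real) gives $[\bar\partial^{\ast},m_\chi]=-e(\bar\partial\chi)^{\ast}$ and $[\partial_G^{\ast},m_\chi]=-e(\partial\chi)^{\ast}$. Substituting the first into the target expression,
\[
[\bar\partial,e(\bar\partial\chi)^{\ast}]=-[\bar\partial,[\bar\partial^{\ast},m_\chi]],
\]
and expanding via the graded Jacobi identity (treating $\bar\partial,\bar\partial^{\ast}$ as odd and $m_\chi$ as even), one produces a term $[\bar\partial^{\ast},[\bar\partial,m_\chi]]=[\bar\partial^{\ast},e(\bar\partial\chi)]$ together with a term $[m_\chi,\Box_G]$. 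Now I would invoke the Kähler identity $\bar\partial^{\ast}=-i[\Lambda,\partial_G]$ (valid for the $E$-valued complex because the Chern connection is used), substitute it into $[\bar\partial^{\ast},e(\bar\partial\chi)]$, and rearrange using $[\Lambda,e(\bar\partial\chi)]=$ its standard Kähler-identity expression and the relation $\partial_G e(\bar\partial\chi)+e(\bar\partial\chi)\partial_G=e(\partial\bar\partial\chi)$. After cancellations, the $[m_\chi,\Box_G]$ term combines with the residual pieces and the Chern-connection term to yield precisely $-e(\partial\chi)\partial_G^{\ast}+ie(\partial\bar\partial\chi)\Lambda$, which is the asserted identity.

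For the second identity I would argue pointwise. Since $\alpha\in A^{n,q}(Y,E)$ has bidegree $(n,q)$, the wedge $e(\partial\chi)\alpha$ lies in $A^{n+1,q}(Y,E)=0$. Applying the standard Clifford-type relation on the exterior algebra,
\[
e(\theta)e(\theta)^{\ast}+e(\theta)^{\ast}e(\theta)=|\theta|^{2}_{\omega}\,\mathrm{Id},
\]
to $\theta=\partial\chi$ and pairing with $\alpha$ gives $\|e(\partial\chi)^{\ast}\alpha\|^{2}_{G,\omega}=\int_Y|\partial\chi|^{2}_{\omega}|\alpha|^{2}_{G,\omega}\,dV_Y$ because the second term on the left vanishes. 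Applying the same relation with $\theta=\bar\partial\chi$ yields $\|e(\bar\partial\chi)\alpha\|^{2}_{G,\omega}+\|e(\bar\partial\chi)^{\ast}\alpha\|^{2}_{G,\omega}=\int_Y|\bar\partial\chi|^{2}_{\omega}|\alpha|^{2}_{G,\omega}\,dV_Y$. Since $|\partial\chi|^{2}_{\omega}=|\bar\partial\chi|^{2}_{\omega}$ for real $\chi$, the two right-hand sides coincide, giving the claim.

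The main obstacle is the sign and factor bookkeeping in the derivation of the first identity: the graded Jacobi identity with two odd operators produces several terms of opposite sign, and each substitution of a Kähler identity carries a factor of $i$, so care is required to ensure that the curvature-type term emerges exactly as $ie(\partial\bar\partial\chi)\Lambda$ rather than with a spurious sign or factor of $2$. Once this bookkeeping is settled, both identities are formal consequences of Kähler geometry and the algebra of exterior products, and no hypothesis beyond convergence of the integrals in \eqref{e24} is needed.
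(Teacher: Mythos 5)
The paper states this proposition without proof --- Section 2.6 simply quotes it from \cite{DoX84} and the surrounding references --- so there is no internal argument to compare against; I am judging your proposal on its own terms. Your treatment of the second identity is complete and correct: for $\alpha$ of bidegree $(n,q)$ one has $e(\partial\chi)\alpha=0$ for degree reasons, and the pointwise anticommutation relation $e(\theta)e(\theta)^{\ast}+e(\theta)^{\ast}e(\theta)=|\theta|^{2}_{\omega}\,\mathrm{Id}$ applied to $\theta=\partial\chi$ and $\theta=\bar{\partial}\chi$, together with $|\partial\chi|^{2}_{\omega}=|\bar{\partial}\chi|^{2}_{\omega}$ for real $\chi$, gives the norm identity pointwise and hence after integration. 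This is the standard argument in the literature, and your restriction to real $\chi$ is legitimate since in this paper $\chi=\log\eta$ with $\eta>0$.

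The first identity is where the gap lies. Your toolkit is the right one: $[\bar{\partial},m_{\chi}]=e(\bar{\partial}\chi)$, its adjoint $[\bar{\partial}^{\ast},m_{\chi}]=-e(\bar{\partial}\chi)^{\ast}$, the graded Jacobi identity giving $[\bar{\partial},e(\bar{\partial}\chi)^{\ast}]=[\bar{\partial}^{\ast},e(\bar{\partial}\chi)]-[\Box_{G},m_{\chi}]$, the K\"ahler identity $\bar{\partial}^{\ast}=-i[\Lambda,\partial_{G}]$, and $[\partial_{G},e(\bar{\partial}\chi)]=e(\partial\bar{\partial}\chi)$ are all individually correct. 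But the decisive step, where you claim that ``after cancellations'' the residual terms assemble into exactly $-e(\partial\chi)\partial_{G}^{\ast}+ie(\partial\bar{\partial}\chi)\Lambda$, is asserted rather than carried out, and you yourself flag the sign bookkeeping as the main obstacle. This is not a harmless omission: the expansion of $[\bar{\partial}^{\ast},e(\bar{\partial}\chi)]$ produces several terms whose signs depend on conventions you never fix (whether the bracket of two odd operators is the anticommutator, the sign in $[\Lambda,e(\bar{\theta})]=\pm ie(\theta)^{\ast}$, and so on), and the assembly does not close without invoking the bidegree hypothesis --- for instance the term $\Lambda e(\partial\bar{\partial}\chi)$ only drops because $e(\partial\bar{\partial}\chi)$ annihilates $(n,q)$-forms, and likewise $\partial_{G}$ and $e(\partial\chi)$ annihilate $(n,q)$-forms --- facts your outline uses for the second identity but never for the first. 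To close the argument you should either carry the commutator computation to the end with fixed conventions, restricting throughout to bidegree $(n,q)$ where these simplifications occur, or verify the identity directly at a point in $\omega$-normal coordinates with a $G$-normal frame, which reduces it to an explicit flat computation. Finally, note that the displayed formula in the statement contains a typo --- $ie(\partial\bar{\partial}\varphi)\Lambda$ should read $ie(\partial\bar{\partial}\chi)\Lambda$ --- which your target expression silently and correctly repairs.
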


\section{The regularising technique}
This section is devoted to prove Proposition \ref{p11}. Recall that in \cite{Dem92,DPS01}, such an approximation was already made for a line bundle as follows:
\begin{proposition}[Theorem 2.2.1, \cite{DPS01}]\label{p31}
Let $(L,\varphi)$ be a (singular) Hermitian line bundle over $Y$ such that $i\Theta_{L,\varphi}\geqslant v$ for a real smooth $(1,1)$-form $v$. There exits a family of singular metrics $\{\varphi_{\varepsilon}\}_{\varepsilon>0}$ with the following properties:
\begin{enumerate}
  \item[(a)] $\varphi_{\varepsilon}$ is smooth on $Y-Z_{\varepsilon}$ for a subvariety $Z_{\varepsilon}$;

  \item[(b)] $\varphi_{\varepsilon_{2}}\geqslant\varphi_{\varepsilon_{1}}\geqslant\varphi$ holds for any $0<\varepsilon_{1}\leqslant\varepsilon_{2}$;

  \item[(c)] $\mathscr{I}(\varphi)=\mathscr{I}(\varphi_{\varepsilon})$; and

  \item[(d)] $i\Theta_{L,\varphi_{\varepsilon}}\geqslant v-\varepsilon\omega$.
\end{enumerate}
\end{proposition}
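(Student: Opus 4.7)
The plan is to follow the Bergman-kernel regularisation technique of Demailly, refined in Demailly--Peternell--Schneider to preserve the smooth lower bound $v$.

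Fix a smooth Hermitian reference metric $h_0$ on $L$ with curvature $\theta_0$, so that globally $\varphi = h_0 + \psi$ for a quasi-psh function $\psi$ satisfying $i\partial\bar\partial \psi \geq v - \theta_0$. The task reduces to regularising $\psi$ by a sequence $\{\psi_\varepsilon\}$ with analytic singularities such that $\psi_\varepsilon \searrow \psi$, $\mathscr{I}(\psi_\varepsilon) = \mathscr{I}(\psi)$, and $i\partial\bar\partial \psi_\varepsilon \geq v - \theta_0 - \varepsilon\omega$ (adding $h_0$ back then yields $\varphi_\varepsilon$). Fix an ample line bundle $(A, h_A)$ with curvature $\omega_A$. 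For each integer $m \geq 1$, form the (finite-dimensional) Hilbert space
$$\mathcal{H}_m := \left\{ \sigma \in H^0(Y, L^{\otimes m}\otimes A) : \int_Y |\sigma|^2_{m\varphi + h_A}\, dV_\omega < \infty \right\},$$
pick an orthonormal basis $\{\sigma_{m,j}\}$, and define the Bergman-kernel approximating weight
$$\varphi_\varepsilon := \varphi + \frac{1}{m}\log B_m, \qquad B_m := \sum_j |\sigma_{m,j}|^2_{m\varphi+h_A},$$
as a weight on $L$, for $m = m(\varepsilon)$ to be chosen.

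Property (a) is immediate: $\varphi_\varepsilon$ has logarithmic singularities along the common zero locus $Z_\varepsilon$ of the $\sigma_{m,j}$, which is analytic. The convergence underlying (b) rests on the two-sided estimate
$$\varphi - \frac{C \log m}{m} \leq \varphi_\varepsilon \leq \sup_{B(\cdot,\, 1/m)} \varphi + \frac{C'}{m},$$
where the lower bound comes from the Ohsawa--Takegoshi extension theorem applied to germs of a local frame at each point $y$, producing sections in $\mathcal{H}_m$ with controlled norm, and the upper bound comes from the submean-value property of the plurisubharmonic function $\log\sum_j |f_{m,j}|^2$ (local representatives of $\sigma_{m,j}$). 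Monotonicity in (b) is then arranged by replacing the sequence by regularised maxima along a dyadic subsequence $\varepsilon_k = 2^{-k}$. Property (c) follows from the same two-sided estimates combined with Demailly's strong openness theorem for multiplier ideals.

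The curvature estimate (d) is the technical heart. The naive plurisubharmonicity of $\log\sum_j|f_{m,j}|^2$ gives $i\partial\bar\partial\log B_m \geq -m\, i\partial\bar\partial\varphi - \omega_A$, yielding only the crude bound $i\Theta_{L,\varphi_\varepsilon} \geq -\omega_A/m$, which ignores $v$ entirely. The sharper estimate $\geq v - \varepsilon\omega$ requires a Bergman-kernel asymptotic analysis: away from $Z_\varepsilon$, Ohsawa--Takegoshi delivers a pointwise lower bound on $B_m$ and a comparison with extremal extensions yields Hessian-level control $|i\partial\bar\partial(\tfrac{1}{m}\log B_m)| \leq \varepsilon\omega$; near $Z_\varepsilon$ the positive singular contribution from $\log|f|^2$-type terms absorbs the difference. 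Combining with $i\partial\bar\partial \varphi \geq v$ then yields the desired inequality. The main obstacle is precisely this refined Hessian control on the Bergman-kernel correction, coupled with the delicate choice of the ample twist $A$: it must be large enough for Ohsawa--Takegoshi to apply in (b) and (c), yet the error term $\omega_A/m$ must be absorbable into $\varepsilon\omega$. This is arranged by taking $A = A_0^{\otimes k(\varepsilon)}$ for a fixed polarization $A_0$ and a suitable $k(\varepsilon)$, balancing the two competing requirements.
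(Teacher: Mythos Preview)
The paper does not give a proof of this proposition: it is stated as Theorem~2.2.1 of \cite{DPS01} and immediately used as a black box in the proof of Proposition~\ref{p11}. There is no argument in the paper to compare your attempt against.

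On the merits of your sketch itself: the Bergman-kernel framework and the treatment of (a)--(c) follow the standard Demailly/DPS line and are fine. The gap is in (d). You correctly identify that the naive positivity of $\log\sum_j|f_{m,j}|^2$ yields only $i\Theta_{L,\varphi_\varepsilon}\geqslant -\tfrac{1}{m}\omega_A$, which forgets $v$, and that recovering the smooth lower bound is the crux. But the mechanism you propose---``Bergman-kernel asymptotic analysis'' giving pointwise Hessian control $\bigl|i\partial\bar\partial(\tfrac{1}{m}\log B_m)\bigr|\leqslant\varepsilon\omega$---does not go through here. Tian--Catlin--Zelditch type expansions require a \emph{smooth} (positively curved) reference weight; your $\varphi$ is singular, and even on any locus where $\varphi$ happens to be smooth the constants in such $C^2$ estimates depend on uncontrolled higher derivatives of $\varphi$. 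The sentence ``near $Z_\varepsilon$ the positive singular contribution from $\log|f|^2$-type terms absorbs the difference'' does not help either: those contributions are current-supported on $Z_\varepsilon$ and say nothing about the smooth lower bound $v-\varepsilon\omega$ at nearby points. You yourself flag this as ``the main obstacle'', and the closing paragraph describes the difficulty rather than resolving it.

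In \cite{DPS01} the lower bound $v$ is preserved by a different device: one works with the quasi-psh potential $\psi$ satisfying $i\partial\bar\partial\psi\geqslant v-\theta_0$ and combines the global Bergman data with a local regularisation/gluing procedure that respects lower Hessian bounds (convolution of plurisubharmonic functions remains plurisubharmonic), rather than by appealing to $C^2$ asymptotics of a Bergman kernel built from the singular weight.
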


Thanks to the openness property of the multiplier ideal sheaf \cite{GuZ15}, one can arrange $h_{\varepsilon}$ with logarithmic poles along $Z_{\varepsilon}$ according to the remark in \cite{DPS01}. Now we furthermore assume that there exists a section $\xi$ of some multiple $L^{k}$ such that $\sup_{Y}|\xi|_{h^{k}}<\infty$. Then the set 
\[
\{y\in Y;\nu(h_{\varepsilon},y)>0\}
\] 
for every $\varepsilon>0$ is contained in the subvariety $Z:=\{y\in Y;\xi(y)=0\}$ by property (b). Here $\nu(h_{\varepsilon},y)$ refers to the Lelong number \cite{Dem12} of $h_{\varepsilon}$ at $y$. Hence, instead of (a), we can assume that

(a') $h_{\varepsilon}$ is smooth on $Y-Z$ and has logarithmic poles along $Z$, where $Z$ is a subvariety of $Y$ independent of $\varepsilon$.

Now we are ready to prove Proposition \ref{p11}.
\begin{proof}[Proof of Proposition \ref{p11}]
Take a K\"{a}hler form $\Omega$ on $X$. By Proposition \ref{p31} and the remark after that, there exists a family of singular metrics $\{\varphi_{\varepsilon}\}$ with the following properties:
\begin{enumerate}
  \item[(a)] $\varphi_{\varepsilon}$ is smooth on $X^{\prime}$ for an open subvariety $X^{\prime}$ independent of $\varepsilon$;

  \item[(b)] $\varphi_{\varepsilon_{2}}\geqslant\varphi_{\varepsilon_{1}}\geqslant\varphi$ holds for any $0<\varepsilon_{1}\leqslant\varepsilon_{2}$;

  \item[(c)] $\mathscr{I}(\varphi)=\mathscr{I}(\varphi_{\varepsilon})$; and

  \item[(d)] $i\Theta_{\mathcal{O}_{E}(1),\varphi_{\varepsilon}}\geqslant v-\varepsilon\Omega$.
\end{enumerate}
Moreover, since $\xi\in\mathcal{O}_{E}(k)$, $\pi_{\ast}\xi\in S^{k}E$ by the canonical isomorphism
\[
\pi_{\ast}(\mathcal{O}_{E}(k))=S^{k}E. 
\]
As a result, 
\[
\begin{split}
\pi(X^{\prime})&=\pi(X-\{\xi=0\})\\
               &=Y-\{\pi_{\ast}\xi=0\}\\
               &=:Y^{\prime},
\end{split}
\]
which is an open subvariety of $Y$. In particular, it is easy to verify that $\varphi_{\varepsilon}\in\mathcal{H}(X)$ for every $\varepsilon$. Therefore it defines a singular metric 
\[
H^{l}_{\varepsilon}(U,U):=\int_{X_{y}}|U|^{2}e^{-l\varphi_{\varepsilon}}\frac{\omega^{r}_{\varphi_{\varepsilon},y}}{r!}
\]
on $S^{l}E$ (see Definition \ref{d22}) for every positive integer $l$ and $U\in S^{l}E$. Here we use the fact that $\pi_{\ast}\mathcal{O}_{E}(l)=S^{l}E$. It remains to prove the desired properties. 

(a), (c) and (d) are obvious. Recall that $S^{l}H$ can be rewritten as
\[
S^{l}H(U,U):=\int_{X_{y}}|u|^{2}e^{-l\varphi}\frac{\omega^{r}_{\varphi,y}}{r!}
\]
by \cite{Wu20b}. On the other hand, both $\varphi$ and $\varphi_{\varepsilon}$ are smooth on $X_{y}$ with $y\in Y^{\prime}$, we immediately conclude that $\varphi_{\varepsilon}$ converges locally uniformly and decreasingly to $\varphi$ on $X^{\prime}$. Hence $H^{l}_{\varepsilon}\rightarrow S^{l}H$ locally uniformly and increasingly on $Y^{\prime}$. (b) is proved. 

In the end, we prove (e) under the assumption that $(E,H)$ is strongly positive in the sense of Nakano. Indeed, by definition 
\[
i\Theta_{\mathcal{O}_{E}(1),\varphi}-\frac{i}{r+2}\pi^{\ast}\Theta_{\det E,\det H}
\]
is positive. Moreover, $i\Theta_{\mathcal{O}_{E}(1),\varphi}$ is also positive by Theorem 1.3 in \cite{Wu20b}. Hence 
\[
i\Theta_{\mathcal{O}_{E}(r+l+1),(r+l+1)\varphi}-i\pi^{\ast}\Theta_{\det E,\det H}
\]
is positive for all positive integer $l$. Consequently, 
\[
i\Theta_{\mathcal{O}_{E}(r+l+1),(r+l+1)\varphi_{\varepsilon}}-i\pi^{\ast}\Theta_{\det E,\det H}\geqslant-C_{l}\varepsilon\Omega.
\]
Now on $Y^{\prime\prime}$ we apply Berndtsson's curvature formula in \cite{Ber09,Ber11}, which says for any line bundle $(L,\psi)$ over $X$, the curvature of $\pi_{\ast}(K_{X/Y}\otimes L)$ associated with the $L^{2}$-metric satisfies that
\[
\sum<i\Theta_{i\bar{j}}s_{i},s_{j}>\geqslant\sum\int_{X_{y}}ic(\psi)_{i\bar{j}}s_{i}\wedge\bar{s}_{j}e^{-\psi}.
\]
Here $\{s_{i}\}$ is $n$-tuple of sections of $\pi_{\ast}(K_{X/Y}\otimes L)$. Let 
\[
(L,\psi)=(\mathcal{O}_{E}(r+l+1)\otimes\pi^{\ast}\det E^{\ast},(r+l+1)\varphi_{\varepsilon}-\pi^{\ast}\phi),
\]
where $\phi$ is the weight function of $\det H$. At this time, the direct image equals to $S^{l}E$, the $L^{2}$-metric is just $H^{l}_{\varepsilon}$ and the associated curvature satisfies 
\begin{equation}\label{e31}
\sum<i\Theta_{i\bar{j}}s_{i},s_{j}>\geqslant-C_{l}\varepsilon^{\prime}\sum\int_{X_{y}}ic(\Omega)_{i\bar{j}}s_{i}\wedge\bar{s}_{j}e^{-(r+l+1)\varphi_{\varepsilon}+\pi^{\ast}\phi},
\end{equation}
where $c(\Omega)_{i\bar{j}}$ is defined as (see Definition \ref{d22})
\[
\begin{split}
c(\Omega)_{i\bar{j}}:=<\frac{\delta}{\delta y_{i}},\frac{\delta}{\delta y_{j}}>_{\Omega}.
\end{split}
\]
The estimate (\ref{e31}) is due to formula (\ref{e22}), the fact that
\[
i\Theta_{\mathcal{O}_{E}(r+l+1),(r+l+1)\varphi_{\varepsilon}}-i\pi^{\ast}\Theta_{\det E,\phi}\geqslant-C_{l}\varepsilon\Omega
\]
and some elementary computation. Since $y\in Y^{\prime\prime}\subset Y^{\prime}$, $\varphi$ is smooth along $X_{y}$. Therefore
\[
\sum\int_{X_{y}}ic(\Omega)_{i\bar{j}}s_{i}\wedge\bar{s}_{j}e^{-(r+l+1)\varphi_{\varepsilon}+\pi^{\ast}\phi}
\]
is uniformly bounded with respect to $\varepsilon$. It obviously implies that
\[
  i\Theta_{S^{l}E,H^{l}_{\varepsilon}}\geqslant-C_{l}\varepsilon^{\prime\prime}\omega\otimes\mathrm{Id}_{S^{l}E}
  \]
in the sense of Nakano. Moreover, from the proof we see that if $i\Theta_{\mathcal{O}_{E}(1),\varphi}-\frac{i}{r+2}\pi^{\ast}\Theta_{\det E,\det H}>\varepsilon\omega$, which means that $(E,H)$ is strongly strictly positive in the sense of Nakano, we can even arrange the thing that
\[
i\Theta_{S^{l}E,H^{l}_{\varepsilon}}\geqslant C_{l}(1-\varepsilon^{\prime})\omega\otimes\mathrm{Id}_{S^{l}E}
\]
in the sense of Nakano.
\end{proof}

Then throughout the whole paper, for a given singular metric $H$ on $E$, $\varphi$ will always refer to its corresponding metric on $\mathcal{O}_{E}(1)$. $\{H^{k}_{\varepsilon}\}$ and $\{\varphi_{\varepsilon}\}$ will always be the regularising sequence provided by Proposition \ref{p11}. In particular, $H^{k}_{\varepsilon}$ is smooth on an open subvariety $Y^{\prime}$, whereas $\varphi_{\varepsilon}$ is smooth on $X^{\prime}$ with $X^{\prime}=\pi^{-1}(Y^{\prime})$. When $k=1$, $H^{1}_{\varepsilon}$ will be simply denoted by $H_{\varepsilon}$.

\section{Harmonic theory}
\subsection{Global theory}
Firstly, we use the method in \cite{Dem82} to construct a complete K\"{a}hler metric on $Y^{\prime}$ as follows. Since $Y^{\prime}$ is weakly pseudo-convex, we can take a smooth plurisubharmonic exhaustion function $\psi$ on $Y^{\prime}$. Define $\tilde{\omega}_{l}=\omega+\frac{1}{l}i\partial\bar{\partial}\psi^{2}$ for $l\gg0$. It is easy to verify that $\tilde{\omega}_{l}$ is a complete K\"{a}hler metric on $Y^{\prime}$ and $\tilde{\omega}_{l_{1}}\geqslant\tilde{\omega}_{l_{2}}\geqslant\omega$ for $l_{1}\leqslant l_{2}$. 

Let $L^{n,q}_{(2)}(Y^{\prime},E)_{H_{\varepsilon},\tilde{\omega}_{l}}$ be the $L^{2}$-space of $E$-valued $(n,q)$-forms $\alpha$ on $Y^{\prime}$ with respect to the inner product given by $H_{\varepsilon},\tilde{\omega}_{l}$. Then we have the orthogonal decomposition \cite{GHS98}
\begin{equation}\label{e41}
L^{n,q}_{(2)}(Y^{\prime},E)_{H_{\varepsilon},\tilde{\omega}_{l}}=\mathrm{Im}\bar{\partial}\bigoplus\mathcal{H}^{n,q}_{H_{\varepsilon}, \tilde{\omega}_{l}}(E)\bigoplus\mathrm{Im}\bar{\partial}^{\ast}_{H_{\varepsilon}}
\end{equation}
where
\[
\begin{split}
\mathrm{Im}\bar{\partial}&=\mathrm{Im}(\bar{\partial}:L^{n,q-1}_{(2)}(Y^{\prime},E)_{H_{\varepsilon},\tilde{\omega}_{l}}\rightarrow L^{n,q}_{(2)}(Y^{\prime},E)_{H_{\varepsilon},\tilde{\omega}_{l}}),\\
  \mathcal{H}^{n,q}_{H_{\varepsilon}, \tilde{\omega}_{l}}(E)&=\{\alpha\in L^{n,q}_{(2)}(Y^{\prime},E)_{H_{\varepsilon},\tilde{\omega}_{l}};\bar{\partial}\alpha=0, \bar{\partial}^{\ast}_{H_{\varepsilon}}\alpha=0\},
\end{split}
\]
and
\[
\mathrm{Im}\bar{\partial}^{\ast}_{H_{\varepsilon}}=\mathrm{Im}(\bar{\partial}^{\ast}_{H_{\varepsilon}}:L^{n,q+1}_{(2)}(Y^{\prime},E)_{H_{\varepsilon},\tilde{\omega}_{l}}\rightarrow L^{n,q}_{(2)}(Y^{\prime},E)_{H_{\varepsilon},\tilde{\omega}_{l}}).
\]
We give a brief explanation for decomposition (\ref{e41}). Usually $\mathrm{Im}\bar{\partial}$ is not closed in the $L^{2}$-space of a noncompact manifold even if the metric is complete. However, in the situation we consider here, $Y^{\prime}$ has the compactification $Y$, and the forms on $Y^{\prime}$ are bounded in $L^{2}$-norms. Such a form will have good extension properties. Therefore the set $L^{n,q}_{(2)}(Y^{\prime},E)_{H_{\varepsilon},\tilde{\omega}_{l}}\cap\mathrm{Im}\bar{\partial}$ behaves much like the space 
\[
\mathrm{Im}(\bar{\partial}:L^{n,q-1}_{(2)}(Y,E)_{H,\omega}\rightarrow L^{n,q}_{(2)}(Y,E)_{H,\omega}) 
\]
on $Y$, which is surely closed. The complete explanation can be found in \cite{Fuj12,Wu17}.

Now we have all the ingredients for the definition of $\Box_{0}$-harmonic forms. We denote the Lapalcian operator on $Y^{\prime}$ associated to $\tilde{\omega}_{l}$ and $H_{\varepsilon}$ by $\Box_{l,\varepsilon}$. Recall that for two $E$-valued $(n,q)$-forms $\alpha,\beta$ (not necessary to be $\bar{\partial}$-closed), we say they are cohomologically equivalent if there exits an $E$-valued $(n,q-1)$-form $\gamma$ such that $\alpha=\beta+\bar{\partial}\gamma$. We denote by $\alpha\in[\beta]$ this equivalence relationship.
\begin{definition}[=Definition \ref{d11}]\label{d41}
Let $\alpha$ be an $E$-valued $(n,q)$-form on $Y$ with bounded $L^{2}$-norm with respect to $H,\omega$. Assume that for every $l\gg1,\varepsilon\ll1$, there exists a representative $\alpha_{l,\varepsilon}\in[\alpha|_{Y^{\prime}}]$ such that
\begin{enumerate}
  \item $\Box_{l,\varepsilon}\alpha_{l,\varepsilon}=0$ on $Y^{\prime}$;
  \item $\alpha_{l,\varepsilon}\rightarrow\alpha|_{Y^{\prime}}$ in $L^{2}$-norm.
\end{enumerate}
Then we call $\alpha$ a $\Box_{0}$-harmonic form. The space of all the $\Box_{0}$-harmonic forms is denoted by
\[
\mathcal{H}^{n,q}(Y,E(H),\Box_{0}).
\]
\end{definition}

We will show that Definition \ref{d41} is compatible with the usual definition of $\Box_{0}$-harmonic forms for a smooth $H$ by proving the Hodge-type isomorphism, i.e. Proposition \ref{p12}.

\begin{proof}[Proof of Proposition \ref{p12}]
By Proposition \ref{p24}, we have 
\[
H^{q}(Y,K_{Y}\otimes E(H))\simeq\frac{\mathrm{Ker}(\bar{\partial}:A^{n,q}(Y,E(H))\rightarrow A^{n,q+1}(Y,E(H)))}{\mathrm{Im}(\bar{\partial}:A^{n,q-1}(Y,E(H))\rightarrow A^{n,q}(Y,E(H)))}.
\]
Hence a given cohomology class $[\alpha]\in H^{q}(Y,K_{Y}\otimes E(H))$ is represented by a $\bar{\partial}$-closed $E$-valued $(n,q)$-form $\alpha$ with $\|\alpha\|_{H,\omega}<\infty$. We denote $\alpha|_{Y^{\prime}}$ simply by $\alpha_{Y^{\prime}}$. Since $\tilde{\omega}_{l}\geqslant\omega$, it is easy to verify that
\[
|\alpha_{Y}|^{2}_{H_{\varepsilon},\tilde{\omega}_{l}}dV_{\tilde{\omega}_{l}}\leqslant|\alpha|^{2}_{H_{\varepsilon},\omega}dV_{\omega},
\]
which leads to inequality $\|\alpha_{Y}\|_{H_{\varepsilon},\tilde{\omega}_{l}}\leqslant\|\alpha\|_{H_{\varepsilon,\omega}}$ with $L^{2}$-norms. Hence by property (b), we have
$\|\alpha_{Y}\|_{H_{\varepsilon},\tilde{\omega}_{l}}\leqslant\|\alpha\|_{H,\omega}$ which implies
\[
\alpha_{Y}\in L^{n,q}_{(2)}(Y^{\prime},E)_{H_{\varepsilon},\tilde{\omega}_{l}}.
\]
By decomposition (\ref{e41}), we have a harmonic representative $\alpha_{l,\varepsilon}$ in
\[
\mathcal{H}^{n,q}_{H_{\varepsilon,\tilde{\omega}_{l}}}(E),
\]
which means that $\Box_{l,\varepsilon}\alpha_{l,\varepsilon}=0$ on $Y^{\prime}$ for all $l,\varepsilon$. Moreover, since a harmonic representative minimizes the $L^{2}$-norm, we have
\[
   \|\alpha_{l,\varepsilon}\|_{H_{\varepsilon},\tilde{\omega}_{l}}\leqslant\|\alpha_{Y}\|_{H_{\varepsilon},\tilde{\omega}_{l}}\leqslant\|\alpha\|_{H,\omega}.
\]
So there exists a limit $\tilde{\alpha}$ of (a subsequence of) $\{\alpha_{l,\varepsilon}\}$ such that
\[
\tilde{\alpha}\in[\alpha_{Y^{\prime}}].
\]
It is left to extend it to $Y$.

Indeed, if we admit Proposition \ref{p13} for the time being, $\ast\tilde{\alpha}$ will be a holomorphic $E$-valued $(n-q,0)$-form on $Y^{\prime}$. In particular, since the $\ast$-operator preserves the $L^{2}$-norm, the $L^{2}$-norm of $\ast\tilde{\alpha}$ is also bounded. Now apply the canonical $L^{2}$-extension theorem \cite{Ohs02}, we obtain a holomorphic extension of $\ast\tilde{\alpha}$ on $Y$, which is still denoted by $\ast\tilde{\alpha}$. We denote this morphism by
\[
\begin{split}
S^{q}:H^{q}(Y,K_{Y}\otimes E(H))&\rightarrow H^{0}(Y,\Omega^{n-q}_{Y}\otimes E)\\
[\alpha]&\mapsto\ast\tilde{\alpha}.
\end{split}
\]
Let $\hat{\alpha}:=c_{n-q}\omega^{q}\wedge\ast\tilde{\alpha}$, we then obtain an $E$-valued $(n,q)$-form on $Y$. It is easy to verify that $\hat{\alpha}|_{Y^{\prime}}=\tilde{\alpha}$. In summary, we have successfully defined a morphism
\[
\begin{split}
i:H^{q}(Y,K_{Y}\otimes E(H))&\rightarrow\mathcal{H}^{n,q}(Y,E(H),\Box_{0})\\
[\alpha]&\mapsto\hat{\alpha}.
\end{split}
\] 

On the other hand, for a given $\alpha\in\mathcal{H}^{n,q}(Y,E(H),\Box_{0})$, by definition there exists an $\alpha_{l,\varepsilon}\in[\alpha_{Y^{\prime}}]$ with $\alpha_{l,\varepsilon}\in\mathcal{H}^{n,q}_{H_{\varepsilon,\tilde{\omega}_{l}}}(E)$ for every $l,\varepsilon$ such that $\lim\alpha_{l,\varepsilon}=\alpha_{Y^{\prime}}$. In particular, $\bar{\partial}\alpha_{l,\varepsilon}=0$. So all of the $\alpha_{l,\varepsilon}$ together with $\alpha_{Y^{\prime}}$ define a common cohomology class $[\alpha_{Y^{\prime}}]$ in $H^{n,q}(Y^{\prime},E(H))$. It is left to extend this class to $Y$.

We use the $S^{q}$ again. It maps $[\alpha_{Y^{\prime}}]$ to
\[
S^{q}(\alpha_{Y^{\prime}})\in H^{0}(Y,\Omega^{n-q}_{Y}\otimes E).
\]
Then
\[
   c_{n-q}\omega^{q}\wedge S^{q}(\alpha_{Y^{\prime}})\in H^{q}(Y,K_{Y}\otimes E(H))
\]
with $[(c_{n-q}\omega^{q}\wedge S^{q}(\alpha_{Y^{\prime}}))|_{Y^{\prime}}]=[\alpha_{Y^{\prime}}]$ as a cohomology class in
\[
H^{n,q}(Y^{\prime},E(H)).
\]
We denote this morphism by 
\[
\begin{split}
j:\mathcal{H}^{n,q}(Y,E(H),\Box_{0})&\rightarrow H^{q}(Y,K_{Y}\otimes E(H))\\
\alpha&\mapsto[c_{n-q}\omega^{q}\wedge S^{q}(\alpha_{Y})].
\end{split}
\]
It is easy to verify that $i\circ j=\textrm{Id}$ and $j\circ i=\textrm{Id}$. The proof is finished.
\end{proof}

We now prove Proposition \ref{p13} to finish this subsection.
\begin{proof}[Proof of Proposition \ref{p13}]
(1) Since $\alpha$ is $\Box_{0}$-harmonic, there exists an $\alpha_{l,\varepsilon}\in[\alpha_{Y^{\prime}}]$ with $\alpha_{l,\varepsilon}\in\mathcal{H}^{n,q}_{H_{\varepsilon,\tilde{\omega}_{l}}}(E)$ for every $l,\varepsilon$ such that $\lim\alpha_{l,\varepsilon}=\alpha_{Y^{\prime}}$. In particular, $\bar{\partial}\alpha_{l,\varepsilon}=\bar{\partial}^{\ast}_{H_{\varepsilon}}\alpha_{l,\varepsilon}=0$. Apply Proposition \ref{p25} on $Y^{\prime}$ with $\eta=1$, we have
\begin{equation}\label{e42}
\begin{split}
0=&\|\bar{\partial}\alpha_{l,\varepsilon}\|^{2}_{H_{\varepsilon},\tilde{\omega}_{l}}+\|\bar{\partial}^{\ast}_{H_{\varepsilon}}\alpha_{l,\varepsilon}\|^{2}_{H_{\varepsilon},\tilde{\omega}_{l}}\\
=&\|\partial^{\ast}_{H_{\varepsilon}}\alpha_{l,\varepsilon}\|^{2}_{H_{\varepsilon},\tilde{\omega}_{l}}+\|\partial_{H_{\varepsilon}}\alpha_{l,\varepsilon}\|^{2}_{H_{\varepsilon},\tilde{\omega}_{l}}+<i[\Theta_{E,H_{\varepsilon}},\Lambda]\alpha_{l,\varepsilon},\alpha_{l,\varepsilon}>_{H_{\varepsilon},\tilde{\omega}_{l}}.
\end{split}
\end{equation}
Remember that $i\Theta_{E,H_{\varepsilon}}\geqslant-\varepsilon\omega\otimes\mathrm{Id}_{E}$ in the sense of Nakano,
\[
<i[\Theta_{E,H_{\varepsilon}},\Lambda]\alpha_{l,\varepsilon},\alpha_{l,\varepsilon}>_{H_{\varepsilon},\tilde{\omega}_{l}}\geqslant-\varepsilon^{\prime}\tilde{\omega}_{l}
\]
by elementary computation. Now take the limit on the both sides of formula (\ref{e42}) with respect to $l,\varepsilon$, we eventually obtain that
\[
\begin{split}
&\lim\|\partial^{\ast}_{H_{\varepsilon}}\alpha_{l,\varepsilon}\|^{2}_{H_{\varepsilon},\tilde{\omega}_{l}}=\lim\|\partial_{H_{\varepsilon}}\alpha_{l,\varepsilon}\|^{2}_{H_{\varepsilon},\tilde{\omega}_{l}}\\
&=\lim<i[\Theta_{E,H_{\varepsilon}},\Lambda]\alpha_{l,\varepsilon},\alpha_{l,\varepsilon}>_{H_{\varepsilon},\tilde{\omega}_{l}}=0.
\end{split}
\]
In particular,
\[
0=\lim\partial^{\ast}_{H_{\varepsilon}}\alpha_{l,\varepsilon}=\ast\bar{\partial}\ast\lim\alpha_{l,\varepsilon}=\ast\bar{\partial}\ast\alpha
\]
in $L^{2}$-topology. Equivalently, $\bar{\partial}\ast\alpha=0$ on $Y^{\prime}$ in analytic topology, hence is a holomorphic $E$-valued $(n-q,0)$-form on $Y^{\prime}$. On the other hand, since $\ast\alpha$ has the bounded $L^{2}$-norm on $Y^{\prime}$, it extends to the whole space by classic $L^{2}$-extension theorem \cite{Ohs02}. In other word, $\bar{\partial}\ast\alpha=0$ actually holds on $Y$, hence $\ast\alpha$ is an $E$-valued holomorphic $(n-q,0)$-form on $Y$. 

(2) Apply the same argument before, we have $\alpha=c_{n-q}\omega^{q}\wedge\ast\alpha$. Therefore $\alpha$ must be smooth. 
\end{proof}

\subsection{Local theory}
In practice, we will also deal with the manifold with boundary, hence the local theory is also needed. Let $V$ be a bounded domain with smooth boundary $\partial V$ on $(Y,\omega)$. Moreover, there is a smooth plurisubharmonic exhaustion function $r$ on $V$. In particular, $V=\{r<0\}$ and $dr\neq0$ on $\partial V$. The volume form $dS$ of the real hypersurface $\partial V$ is defined by $dS:=\ast(dr)/|dr|_{\omega}$. Let $G$ be a smooth Hermitian metric on $E$. Let $L^{p,q}_{(2)}(V,E)_{G,\omega}$ be the space of $E$-valued $(p,q)$-forms on $V$ which are $L^{2}$-bounded with respect to $G,\omega$. Setting $\tau:=dS/|dr|_{\omega}$ we define the inner product on $\partial V$ by
\[
[\alpha,\beta]_{G}:=\int_{\partial V}<\alpha,\beta>_{G}\tau
\] 
for $\alpha,\beta\in L^{p,q}_{(2)}(V,E)_{G,\omega}$. Then by Stokes' theorem we have the following:
\begin{equation}\label{e43}
\begin{split}
<\bar{\partial}\alpha,\beta>_{G}&=<\alpha,\bar{\partial}^{\ast}\beta>_{G}+[\alpha,e(\bar{\partial}r)^{\ast}\beta]_{G}\\
<\partial_{G}\alpha,\beta>_{G}&=<\alpha,\partial^{\ast}_{G}\beta>_{G}+[\alpha,e(\partial r)^{\ast}\beta]_{G}
\end{split}
\end{equation}
where $\bar{\partial}^{\ast},\partial^{\ast}$ are the adjoint operators defined on $Y$ (see Sect.2.6). In particular, if $e(\bar{\partial}r)^{\ast}\alpha=0$, the Bochner formula (Proposition \ref{p25}) on $V$ will be
\begin{equation}\label{e44}
\begin{split}
&\Box_{G}=\bar{\Box}_{G}+[i\Theta_{E,G},\Lambda],\\
&\Box_{e^{-\chi}G}=\bar{\Box}_{e^{-\chi}G}+[i(\Theta_{E,G}+\partial\bar{\partial}\chi),\Lambda]\textrm{ and }\\
&\|\sqrt{\eta}(\bar{\partial}+e(\bar{\partial}\chi))\alpha\|^{2}_{G,\omega}+\|\sqrt{\eta}\bar{\partial}^{\ast}\alpha\|^{2}_{G,\omega}=\|\sqrt{\eta}(\partial^{\ast}_{G}-e(\partial\chi)^{\ast})\alpha)\|^{2}_{G,\omega}\\
&+\|\sqrt{\eta}\partial_{G}\alpha\|^{2}_{G,\omega}+<\eta i[\Theta_{E,G}+\partial\bar{\partial}\varphi,\Lambda]\alpha,\alpha>_{G,\omega}+[\partial^{\ast}_{G}\alpha,e(\partial r)^{\ast}\alpha]_{G}
\end{split}
\end{equation} 
where $\eta$ is a positive smooth function on $Y$ with $\chi:=\log\eta$.

We then define the space of harmonic forms on $V$ by
\[
\mathcal{H}^{n,q}(V,E(G),r,\omega):=\{\alpha\in L^{n,q}_{(2)}(\bar{V},E)_{G,\omega};\bar{\partial}\alpha=\bar{\partial}^{\ast}\alpha=e(\bar{\partial}r)^{\ast}\alpha=0\}.
\]

Now return back to our setting: $(E,H)$ is a singular Hermitian vector bundle that is strongly positive in the sense of Nakano. By Proposition \ref{p11}, there exists a regularising sequence $\{H_{\varepsilon}\}$ such that
\[
i\Theta_{E,H_{\varepsilon}}\geqslant-\varepsilon\omega\otimes\mathrm{Id}_{E}
\] 
in the sense of Nakano. Take $V\subset Y^{\prime}$. Using the same notations as in Sect.4.1, the harmonic space with respect to $H$ is defined as
\[
\begin{split}
\mathcal{H}^{n,q}(V,E(H),r):=&\{\alpha\in L^{n,q}_{(2)}(V,E)_{H,\omega};\textrm{there exits }\alpha_{l,\varepsilon}\in[\alpha]\textrm{ such that }\\
&\alpha_{l,\varepsilon}\in\mathcal{H}^{n,q}(V,E(G_{\varepsilon}),r,\tilde{\omega}_{l})\textrm{ and }\alpha_{l,\varepsilon}\rightarrow\alpha\textrm{ in }L^{2}\textrm{-limit}\}.
\end{split}
\]
We then generalise the work in \cite{Tak95} here.
\begin{proposition}\label{p41}
Assume that $(E,H)$ is strongly positive in the sense of Nakano (so that $E(H)$ is coherent by Proposition \ref{p23}). Then we have the following conclusions:
\begin{enumerate}
  \item Assume $\alpha\in L^{n,q}_{(2)}(Y,E)_{H,\omega}$ satisfied $e(\bar{\partial}r)^{\ast}\alpha=0$ on $V$. Then $\alpha$ satisfies $\bar{\partial}\alpha=\lim\bar{\partial}^{\ast}_{H_{\varepsilon}}\alpha=0$ on $V$ if and only if $\bar{\partial}\ast\alpha=0$ and $\lim<ie(\Theta_{H_{\varepsilon}}+\partial\bar{\partial}r)\Lambda\alpha,\alpha>_{H_{\varepsilon}}=0$ on $V$.
  \item $\mathcal{H}^{n,q}(V,E(H),r)$ is independent of the choice of exhaustion function $r$.
  \item $\mathcal{H}^{n,q}(V,E(H),r)\simeq H^{q}(V,K_{Y}\otimes E(H))$.
  \item For Stein open subsets $V_{1},V_{2}$ in $V$ such that $V_{2}\subset V_{1}$, the restriction map
  \[
  \mathcal{H}^{n,q}(V_{1},E(H),r)\rightarrow\mathcal{H}^{n,q}(V_{2},E(H),r)
  \]
  is well-defined, and further it satisfies the following commutative diagram:
  \[
  \xymatrix{
  \mathcal{H}^{n,q}(V_{1},E(H),r) \ar[d]_{i^{1}_{2}} \ar[r]^{S^{q}_{V_{1}}} & H^{0}(V_{1},\Omega^{n-q}_{Y}\otimes E(H)) \ar[d] \\
  \mathcal{H}^{n,q}(V_{2},E(H),r) \ar[r]^{S^{q}_{V_{2}}} & H^{0}(V_{2},\Omega^{n-q}_{Y}\otimes E(H)).      
  }
  \]
\end{enumerate}
\begin{proof}
The proof uses the same argument as Theorems 4.3 and 5.2 in \cite{Tak95} with minor adjustment. So we only provide the necessary details.

(1) Let $G=e^{-r}H$ and $G_{\varepsilon}=e^{-r}H_{\varepsilon}$. If $\bar{\partial}\alpha=\lim\bar{\partial}^{\ast}_{H_{\varepsilon}}\alpha=0$, then $\lim\bar{\partial}^{\ast}_{G_{\varepsilon}}\alpha=0$ and so $\lim\Box_{G_{\varepsilon}}\alpha=0$. By formula (\ref{e44}) we obtain
\[
\lim(\|\partial^{\ast}_{H_{\varepsilon}}\alpha\|^{2}_{G_{\varepsilon}}+<ie(\Theta_{E,G_{\varepsilon}}+\partial\bar{\partial}r)\Lambda\alpha,\alpha>_{G_{\varepsilon}}+[ie(\partial\bar{\partial}r)\Lambda\alpha,\alpha]_{G_{\varepsilon}})=0
\]
on $V$. Since $<ie(\Theta_{E,H_{\varepsilon}}+\partial\bar{\partial}r)\Lambda\alpha,\alpha>_{G_{\varepsilon}}\geqslant-\varepsilon\omega$ and 
\[
[ie(\partial\bar{\partial}r)\Lambda\alpha,\alpha]_{G_{\varepsilon}}\geqslant0,
\] 
the equality above implies that
\[
\ast\bar{\partial}\ast\alpha=\lim<ie(\Theta_{E,H_{\varepsilon}}+\partial\bar{\partial}r)\Lambda\alpha,\alpha>_{G_{\varepsilon}}=\lim[ie(\partial\bar{\partial}r)\Lambda\alpha,\alpha]_{G_{\varepsilon}}=0.
\]
Equivalently, 
\[
\bar{\partial}\ast\alpha=\lim<ie(\Theta_{E,H_{\varepsilon}}+\partial\bar{\partial}r)\Lambda\alpha,\alpha>_{H_{\varepsilon}}=0.
\]
The necessity is proved. 

Now assiume that $\bar{\partial}\ast\alpha=0$ and $\lim<ie(\Theta_{H_{\varepsilon}}+\partial\bar{\partial}r)\Lambda\alpha,\alpha>_{H_{\varepsilon}}=0$. Since $r$ is plurisubharmonic and $\lim<ie(\Theta_{H_{\varepsilon}})\Lambda\alpha,\alpha>_{H_{\varepsilon}}\geqslant0$, we have $\lim<ie(\partial\bar{\partial}r)\Lambda\alpha,\alpha>_{H_{\varepsilon}}=\lim<ie(\Theta_{H_{\varepsilon}})\Lambda\alpha,\alpha>_{H_{\varepsilon}}=0$. By formula (\ref{e44}) we have $\bar{\partial}\alpha=\lim\bar{\partial}^{\ast}_{H_{\varepsilon}}\alpha=0$.

(2) Let $\tau$ be an arbitrary smooth plurisubharmonic function on $V$. Donnelly and Xavier's formula (\ref{e25}) implies that $\bar{\partial}e(\bar{\partial}\tau)^{\ast}\alpha=ie(\partial\bar{\partial})\Lambda\alpha$ if 
\[
\alpha\in\mathcal{H}^{n,q}(V,E(H),r).
\]
Therefore
\[
\begin{split}
<ie(\partial\bar{\partial}\tau)\Lambda\alpha,\alpha>_{e^{\tau}H_{\varepsilon}}&=<\bar{\partial}e(\bar{\partial}\tau)^{\ast}\alpha,\alpha>_{e^{\tau}H_{\varepsilon}}\\
&=<e(\bar{\partial}\tau)^{\ast}\alpha,\bar{\partial}^{\ast}_{e^{\tau}H_{\varepsilon}}\alpha>_{e^{\tau}H_{\varepsilon}}\\
&=<e(\bar{\partial}\tau)^{\ast}\alpha,\bar{\partial}^{\ast}_{H_{\varepsilon}}\alpha>_{e^{\tau}H_{\varepsilon}}-\|e(\bar{\partial}\tau)^{\ast}\alpha\|^{2}_{e^{\tau}H_{\varepsilon}}.
\end{split}
\]
Take the limit with respect to $\varepsilon$, we then obtain that
\[
<ie(\partial\bar{\partial}\tau)\Lambda\alpha,\alpha>_{e^{\tau}H}=-\|e(\bar{\partial}\tau)^{\ast}\alpha\|^{2}_{e^{\tau}H}.
\]
Notice that $\tau$ is plurisubharmonic, we actually have 
\[
<ie(\partial\bar{\partial}\tau)\Lambda\alpha,\alpha>_{e^{\tau}H}=\|e(\bar{\partial}\tau)^{\ast}\alpha\|^{2}_{e^{\tau}H}=0.
\]
Combine with (1), we eventually obtain that
\[
\mathcal{H}^{n,q}(V,E(H),r)=\mathcal{H}^{n,q}(V,E(H),r+\tau)
\]
for any smooth plurisubharmonic $\tau$, hence the desired conclusion.

(3) is similar with Proposition \ref{p12}, and we omit its proof here.

(4) is intuitive due to the discussions in the global setting. In particular, $S^{q}_{V_{i}}$ with $i=1,2$ is defined in the proof of Proposition \ref{p12}.
\end{proof}
\end{proposition}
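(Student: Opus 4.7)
My plan is to adapt Takegoshi's arguments (\cite{Tak95}, Theorems 4.3 and 5.2) to the singular setting by working throughout with the regularising sequence $\{H_{\varepsilon}\}$ of Proposition \ref{p11} and sending $\varepsilon\to 0$ at each step. The strong Nakano positivity is used precisely to turn the uniform curvature estimate $i\Theta_{E,H_{\varepsilon}}\geqslant-\varepsilon\omega\otimes\mathrm{Id}_{E}$ into genuine vanishings in the limit. Parts (1) and (2) are Bochner identities on $V$; parts (3) and (4) are local analogues of Proposition \ref{p12}.

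\textbf{Part (1).} I would substitute $G_{\varepsilon}:=e^{-r}H_{\varepsilon}$ into the boundary Bochner identity (\ref{e44}) with $\chi=r$; the hypothesis $e(\bar{\partial}r)^{\ast}\alpha=0$ removes the $\bar{\partial}$-boundary contribution, while a term of the form $[ie(\partial\bar{\partial}r)\Lambda\alpha,\alpha]_{G_{\varepsilon}}$ coming from $\partial_{G_{\varepsilon}}^{\ast}$ survives. Under the assumption $\bar{\partial}\alpha=\lim\bar{\partial}^{\ast}_{H_{\varepsilon}}\alpha=0$ one has $\lim\bar{\partial}^{\ast}_{G_{\varepsilon}}\alpha=0$, so (\ref{e44}) expresses zero as a sum of three pieces which are individually non-negative in the limit (the curvature pairing is bounded below by an $O(\varepsilon)$ error that vanishes, and the boundary term is non-negative since $r$ is plurisubharmonic). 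Each piece must therefore vanish in the limit, producing $\bar{\partial}\ast\alpha=0$ via $\partial_{H_{\varepsilon}}^{\ast}=-\ast\bar{\partial}\ast$ together with the stated curvature vanishing. The converse direction reads the same identity backwards, using again the lower bound $i(\Theta_{E,H_{\varepsilon}}+\partial\bar{\partial}r)\geqslant-\varepsilon\omega$.

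\textbf{Parts (2), (3) and (4).} For (2) I would test harmonicity against the perturbed exhaustion $r+\tau$ with $\tau$ smooth plurisubharmonic on $V$, apply Donnelly--Xavier (Proposition \ref{p26}) to compute
\[
\langle ie(\partial\bar{\partial}\tau)\Lambda\alpha,\alpha\rangle_{e^{\tau}H_{\varepsilon}}=\langle e(\bar{\partial}\tau)^{\ast}\alpha,\bar{\partial}^{\ast}_{H_{\varepsilon}}\alpha\rangle_{e^{\tau}H_{\varepsilon}}-\|e(\bar{\partial}\tau)^{\ast}\alpha\|^{2}_{e^{\tau}H_{\varepsilon}},
\]
and let $\varepsilon\to 0$: the first right-hand term drops out, and the opposite signs of the remaining two force both to vanish. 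Combined with part (1) this yields $\mathcal{H}^{n,q}(V,E(H),r)=\mathcal{H}^{n,q}(V,E(H),r+\tau)$, from which independence of the exhaustion follows after absorbing the smooth difference of two exhaustion functions into a plurisubharmonic perturbation. For (3) I would copy the proof of Proposition \ref{p12} verbatim with $V$ in place of $Y$: Proposition \ref{p24} gives the $\bar{\partial}$-cohomology description of $H^{q}(V,K_{Y}\otimes E(H))$, part (1) combined with the $L^{2}$-extension theorem produces $S^{q}\colon\alpha\mapsto\ast\alpha\in H^{0}(V,\Omega^{n-q}_{Y}\otimes E(H))$, and the map $\ast\alpha\mapsto c_{n-q}\omega^{q}\wedge\ast\alpha$ inverts it. For (4) the restriction of $\alpha\in\mathcal{H}^{n,q}(V_{1},E(H),r)$ to a relatively compact Stein $V_{2}\subset V_{1}$ is automatically $L^{2}$ and satisfies the intrinsic characterisation of part (1) on $V_{2}$, so it lies in $\mathcal{H}^{n,q}(V_{2},E(H),r)$; the commutative square then follows because $S^{q}_{V_{i}}$ is intrinsically defined via the Hodge $\ast$.

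\textbf{Main obstacle.} The delicate step is the passage $\varepsilon\to 0$ in part (1): the curvature pairing carries an $\varepsilon$-error from Proposition \ref{p11}(e) that must vanish uniformly in $l$ on $V\subset\subset Y^{\prime}$, and the convergence $\alpha_{l,\varepsilon}\to\alpha$ is only in $L^{2}$, so exchanging the limit with the boundary term $[ie(\partial\bar{\partial}r)\Lambda\alpha,\alpha]_{G_{\varepsilon}}$ and with the individual non-negative summands requires a Takegoshi-style truncation combined with a Fatou-type argument. I expect this bookkeeping of limits to be the main technical hurdle; once justified, the remaining arguments are essentially a translation of the smooth local theory of \cite{Tak95} to the present singular setting.
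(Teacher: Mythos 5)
Your proposal follows essentially the same route as the paper: part (1) via the boundary Bochner identity (\ref{e44}) with the weight $G_{\varepsilon}=e^{-r}H_{\varepsilon}$ and a three-term positivity splitting in the limit $\varepsilon\to0$, part (2) via the Donnelly--Xavier integration-by-parts identity with opposite-sign terms forced to vanish, and parts (3) and (4) by transplanting the proof of Proposition \ref{p12} to $V$. Your closing remark about justifying the exchange of limits is a fair flag of a point the paper itself passes over silently, but it does not change the argument.
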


\section{The main theorem}
This section is devoted to prove Theorem \ref{t11}.
\begin{theorem}[=Theorem \ref{t11}]\label{t51}
Let $f:Y\rightarrow Z$ be a fibration between two compact K\"{a}hler manifolds. Let $n=\dim Y$ and $m=\dim Z$. Suppose that $(E,H)$ is a (singular) Hermitian vector bundle over $Y$ that is strongly positive in the sense of Nakano. Moreover, assume that $H|_{Y_{z}}$ is well-defined for every $z\in Z$. Then the following theorems hold:

{\rm \textbf{I Decomposition Theorem}}. The Leray spectral sequence \cite{GrH78}
  \[
  E^{p,q}_{2}=H^{p}(Z,R^{q}f_{\ast}(K_{Y}\otimes E(H)))\Rightarrow H^{p+q}(Y,K_{Y}\otimes E(H))
  \]
  degenerates at $E_{2}$. As a consequence, it holds that
  \[
  \dim H^{i}(Y,K_{Y}\otimes E(H))=\sum_{p+q=i}\dim H^{p}(Z,R^{q}f_{\ast}(K_{Y}\otimes E(H)))
  \]
  for any $i\geqslant0$.
  
{\rm \textbf{II Torsion freeness Theorem}}. For $q\geqslant0$ the sheaf homomorphism
  \[
  L^{q}: f_{\ast}\Omega^{n-q}\otimes E(H)\rightarrow R^{q}f_{\ast}(K_{Y}\otimes E(H))
  \]
  induced by the $q$-times left wedge product by $\omega$ admits a splitting sheaf homomorphism
  \[
  S^{q}: R^{q}f_{\ast}(K_{Y}\otimes E(H))\rightarrow f_{\ast}\Omega^{n-q}\otimes E(H)\textrm{ with }L^{q}\circ S^{q}=\mathrm{id}.
  \]
  In particular, $R^{q}f_{\ast}(K_{Y}\otimes E(H))$ is torsion free \cite{Kob87} for $q\geqslant0$ and vanishes if $q>n-m$. Furthermore, it is even reflexive if $E(H)=E$.
  
{\rm \textbf{III Injectivity Theorem}}. Let $(L,h)$ be a (singular) Hermitian line bundle over $Y$. Recall that $Y^{\prime}$ is the open subvariety appeared in Proposition \ref{p11}. Assume the following conditions:
\begin{enumerate}
  \item[(a)] the singular part of $h$ is contained in $Y-Y^{\prime}$;
  \item[(b)] $i\Theta_{L,h}\geqslant\gamma$ for some real smooth $(1,1)$-form $\gamma$ on $Y$;
  \item[(c)] for some rational $\delta\ll1$, the $\mathbb{Q}$-twisted bundle
  \[
  E<-\delta L>|_{Y_{z}}
  \]
  is strongly positive in the sense of Nakano for every $z$.
\end{enumerate}
For a (non-zero) section $s$ of $L$ with $\sup_{Y}|s|_{h}<\infty$, the multiplication map induced by the tensor product with $s$
  \[
 R^{q}f_{\ast}(s): R^{q}f_{\ast}(K_{Y}\otimes E(H))\rightarrow R^{q}f_{\ast}(K_{Y}\otimes(E\otimes L)(H\otimes h))
  \]
  is well-defined and injective for any $q\geqslant0$.
  
{\rm \textbf{IV Relative vanishing Theorem}}. Let $g:Z\rightarrow W$ be a fibration to a compact K\"{a}hler manifold $W$. Then the Leray spectral sequence:
  \[
  R^{p}g_{\ast}R^{q}f_{\ast}(K_{Y}\otimes E(H))\Rightarrow R^{p+q}(g\circ f)_{\ast}(K_{Y}\otimes E(H))
  \]  
  degenerates.
\begin{proof}
\textbf{I}. Let $\{U,r_{U}\}$ be a finite Stein covering of $Z$ with smooth strictly plurisubharmonic exhaustion function $r_{U}$. Let 
\[
\mathcal{H}^{n,q}(f^{-1}(U),E(H),f^{\ast}r_{U})
\] 
be the harmonic space defined in Sect.4.2. Then the data 
\[
\{\mathcal{H}^{n,q}(f^{-1}(U),E(H),f^{\ast}r_{U}),i^{1}_{2}\}
\] 
with the restriction morphisms 
\[
i^{1}_{2}:\mathcal{H}^{n,q}(f^{-1}(U_{1}),E(H),f^{\ast}r_{U_{1}})\rightarrow\mathcal{H}^{n,q}(f^{-1}(U_{2}),E(H),f^{\ast}r_{U_{2}}),
\] 
$(U_{2},r_{U_{2}})\subset(U_{1},r_{U_{1}})$, yields a presheaf \cite{Har77} on $Z$ by Proposition \ref{p41}, (4). We denote the associated sheaf by $f_{\ast}\mathcal{H}^{n,q}(E(H))$. Since 
\[
R^{q}f_{\ast}(K_{Y}\otimes E(H))
\] 
is defined as the sheaf associated with the presheaf
\[
U\rightarrow H^{q}(f^{-1}(U),K_{Y}\otimes E(H)),
\]
the sheaf $f_{\ast}\mathcal{H}^{n,q}(E(H))$ is isomorphic to $R^{q}f_{\ast}(K_{Y}\otimes E(H))$ by Proposition \ref{p41}, (3).

Let $\mathcal{C}^{p,q}$ be the space of $p$-cochains associated to $\{U\}$ with values in $f_{\ast}\mathcal{H}^{n,q}(E(H))$. Then $\{\mathcal{C}^{p,q},\delta\}$ is a complex with the coboundary operator $\delta$ whose cohomology group $H^{p}(\mathcal{C}^{\ast,q})$ is isomorphic to 
\[
E^{p,q}_{2}=H^{p}(Y,R^{q}f_{\ast}(K_{Y}\otimes E(H))).
\] 
Since the differential $d:=\delta+\bar{\partial}$ is identically zero, (recall that an element $\alpha\in f_{\ast}\mathcal{H}^{n,q}(E(H))$ must satisfy $\bar{\partial}\alpha=0$), $d_{2}:E^{p,q}_{2}\rightarrow E^{p+2,q-1}_{2}$ is also so which implies the degeneration of the Leray spectral sequence at $E_{2}$, i.e. $E^{p,q}_{2}=E^{p,q}_{\infty}$. 

\textbf{II}. Recall that in the proof of Proposition \ref{p12}, we've defined two morphisms
\[
\begin{split}
S^{q}:H^{q}(Y,K_{Y}\otimes E(H))&\rightarrow H^{0}(Y,\Omega^{n-q}_{Y}\otimes E(H))\\
[\alpha]&\mapsto\ast\tilde{\alpha}
\end{split}
\]
and
\[
\begin{split}
L^{q}:H^{0}(Y,\Omega^{n-q}_{Y}\otimes E(H))&\rightarrow H^{q}(Y,K_{Y}\otimes E(H))\\
\beta&\mapsto[c_{n-q}\omega^{q}\wedge\beta]
\end{split}
\]
such that $L^{q}\circ S^{q}=\textrm{Id}$. These two morphisms lift to the direct images as
\[
\begin{split}
S^{q}:R^{q}f_{\ast}(K_{Y}\otimes E(H))&\rightarrow f_{\ast}(\Omega^{n-q}_{Y}\otimes E(H)),\\
L^{q}:f_{\ast}(\Omega^{n-q}_{Y}\otimes E(H))&\rightarrow R^{q}f_{\ast}(K_{Y}\otimes E(H)).
\end{split}
\]
Here we abuse the notation. In particular, $L^{q}\circ S^{q}=\textrm{Id}$. As a result, $R^{q}f_{\ast}(K_{Y}\otimes E(H))$ is splitting embedded into $f_{\ast}(\Omega^{n-q}_{Y}\otimes E(H))$. Obviously, $f_{\ast}(\Omega^{n-q}_{Y}\otimes E(H))$ is an $\mathcal{O}_{Z}$-submodule of $f_{\ast}(\Omega^{n-q}_{Y}\otimes E)$, whereas $f_{\ast}(\Omega^{n-q}_{Y}\otimes E)$ is torsion free (even reflexive) by \cite{Har80}. We then conclude that $R^{q}f_{\ast}(K_{Y}\otimes E(H))$, as an $\mathcal{O}_{Z}$-submodule of $f_{\ast}(\Omega^{n-q}_{Y}\otimes E)$, is also torsion free. 

When $E(H)=E$, the reflexivity is also inherited since it is a splitting embedding.

\textbf{III}. It is enough to prove that for an arbitrary point $z\in Z$,
\[
\otimes s:H^{q}(f^{-1}(z),K_{Y}\otimes E(H))\rightarrow H^{q}(f^{-1}(z),K_{Y}\otimes(E\otimes L)(H\otimes h))
\] 
is injective. Equivalently, we should prove that 
\[
\otimes s:\mathcal{H}^{n,q}(f^{-1}(z),E(H))\rightarrow\mathcal{H}^{n,q}(f^{-1}(z),(E\otimes L)(H\otimes h))
\]
is well-defined and injective by Proposition \ref{p12}.

Let $\alpha\in\mathcal{H}^{n,q}(f^{-1}(z),E(H))$, by Proposition \ref{p41}, (1) with $r=1$, we have $\bar{\partial}\ast\alpha=\lim<ie(\Theta_{H_{\varepsilon}})\Lambda\alpha,\alpha>_{H_{\varepsilon}}=0$. Let $\{h_{\varepsilon}\}$ be a regularising sequence of $h$ (Proposition \ref{p11}), hence $\{H_{\varepsilon}\otimes h_{\varepsilon}\}$ is a regularising sequence of $H\otimes h$. In particular, $H_{\varepsilon}\otimes h_{\varepsilon}$ is smooth on $Y^{\prime}\cap f^{-1}(z)$. Therefore
\[
\bar{\partial}\ast(s\alpha)=s\bar{\partial}\ast\alpha=0
\]
and on $Y^{\prime}\cap f^{-1}(z)$ we have
\[
\begin{split}
&\lim<ie(\Theta_{E\otimes L,H_{\varepsilon}\otimes h_{\varepsilon}})\Lambda s\alpha,s\alpha>_{H_{\varepsilon}\otimes h_{\varepsilon}}\\
=&\lim|s|^{2}_{h_{\varepsilon}}\lim<ie(\Theta_{E\otimes L,H_{\varepsilon}\otimes h_{\varepsilon}})\Lambda\alpha,\alpha>_{H_{\varepsilon}}\\
=&|s|^{2}_{h}\lim<ie(\Theta_{L,h_{\varepsilon}}\otimes\mathrm{Id}_{E})\Lambda\alpha,\alpha>_{H_{\varepsilon}}\\
\leqslant&\frac{1}{\delta}|s|^{2}_{h}\lim<ie(\Theta_{E,H_{\varepsilon}})\Lambda\alpha,\alpha>_{H_{\varepsilon}}\\
=&0.
\end{split}
\]
The inequality is due to the assumption that $E<-\delta L>$ is strongly positive in the sense of Nakano. Now we apply the Bochner formula (Proposition \ref{p25}) on $(E\otimes L, H_{\varepsilon}\otimes h_{\varepsilon})$ and take the limit, eventually we obtain that
\[
\lim\|\bar{\partial}^{\ast}_{H_{\varepsilon}\otimes h_{\varepsilon}}(s\alpha)\|^{2}_{H_{\varepsilon}\otimes h_{\varepsilon}}=0,
\]
and
\[
\bar{\partial}(s\alpha)=0.
\]
On the other hand, in any local coordinate neighbourhood $V$ in $f^{-1}(z)$,
\[
\int_{V}|s\alpha|^{2}_{H\otimes h}\leqslant\sup_{Y}|s|^{2}_{h}\int_{V}|\alpha|^{2}_{H}<\infty.
\]
The discussions above together imply that 
\[
s\alpha\in \mathcal{H}^{n,q}(f^{-1}(z),(E\otimes L)(H\otimes h))
\] 
by Proposition \ref{p41}, (1). Therefore 
\[
\otimes s:\mathcal{H}^{n,q}(f^{-1}(z),E(H))\rightarrow\mathcal{H}^{n,q}(f^{-1}(z),(E\otimes L)(H\otimes h))
\]
is well-defined. The injectivity is obvious. 

\textbf{IV} is a direct application of \textbf{I}. For a given point $w\in W$, by \textbf{I} the Leray spectral sequence 
\[
E^{p,q}_{2}=H^{p}(g^{-1}(w),R^{q}f_{\ast}(K_{Y}\otimes E(H)))\Rightarrow H^{p+q}((g\circ f)^{-1}(w),K_{Y}\otimes E(H))
\]
degenerates at $E_{2}$. Therefore the Leray spectral sequence:
  \[
  R^{p}g_{\ast}R^{q}f_{\ast}(K_{Y}\otimes E(H))\Rightarrow R^{p+q}(g\circ f)_{\ast}(K_{Y}\otimes E(H))
  \]  
  degenerates.
\end{proof} 
\end{theorem}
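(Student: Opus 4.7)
The plan is to translate each of the four assertions into a statement about harmonic $E$-valued $(n,q)$-forms on fibres or preimages under $f$, and then run Bochner-type arguments based on Proposition \ref{p25} together with the regularising sequences $\{H_\varepsilon\}, \{h_\varepsilon\}$ from Proposition \ref{p11}. The essential local input is Proposition \ref{p41}: for a Stein open $U \subset Z$ equipped with a strictly plurisubharmonic exhaustion $r_U$, we have $H^q(f^{-1}(U), K_Y \otimes E(H)) \simeq \mathcal{H}^{n,q}(f^{-1}(U), E(H), f^*r_U)$, compatibly with restrictions to smaller Stein opens.

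For \textbf{I (Decomposition)}, I would fix a Stein covering $\{U_i\}$ of $Z$ and assemble the presheaf $U \mapsto \mathcal{H}^{n,q}(f^{-1}(U), E(H), f^*r_U)$ into a sheaf $f_*\mathcal{H}^{n,q}(E(H))$ isomorphic to $R^q f_*(K_Y \otimes E(H))$ by Proposition \ref{p41}(3),(4). The \v{C}ech double complex $\mathcal{C}^{p,q}$ with values in $f_*\mathcal{H}^{n,q}(E(H))$ has total differential $d = \delta + \bar{\partial}$, but since every representative is $\bar{\partial}$-closed, $d$ reduces to $\delta$ alone, forcing $d_2 = 0$ and hence degeneration at $E_2$. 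Part \textbf{IV (Relative vanishing)} is a formal consequence: applying \textbf{I} fibrewise over $w \in W$ to $g \circ f$ and sheafifying over $W$ yields the degeneration of the relative Leray spectral sequence.

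For \textbf{II (Torsion freeness)}, I would reuse the global constructions in the proof of Proposition \ref{p12}: $S^q$ sends a harmonic class to its Hodge star, which is holomorphic by Proposition \ref{p13}, while $L^q$ wedges with $c_{n-q}\omega^q$. Since both are compatible with restriction to $f^{-1}(U)$, they sheafify to $S^q: R^q f_*(K_Y \otimes E(H)) \to f_*(\Omega^{n-q}_Y \otimes E(H))$ and $L^q$ going the other way, with $L^q \circ S^q = \mathrm{id}$. This realises $R^q f_*(K_Y \otimes E(H))$ as a direct summand of $f_*(\Omega^{n-q}_Y \otimes E(H)) \hookrightarrow f_*(\Omega^{n-q}_Y \otimes E)$; torsion-freeness (and reflexivity when $E(H) = E$) of the latter, due to Hartshorne, descends to summands. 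Vanishing for $q > n-m$ is a dimension count since $\Omega^{n-q}_Y$ restricts trivially to fibres of dimension $n-m$ in that range.

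For \textbf{III (Injectivity)}, I would reduce to the fibrewise map $\otimes s: \mathcal{H}^{n,q}(Y_z, E(H)) \to \mathcal{H}^{n,q}(Y_z, (E\otimes L)(H\otimes h))$. For $\alpha$ in the source, Proposition \ref{p41}(1) with $r \equiv 0$ gives $\bar{\partial}\ast\alpha = 0$ and $\lim_\varepsilon \langle i[\Theta_{E,H_\varepsilon}, \Lambda]\alpha, \alpha\rangle_{H_\varepsilon} = 0$. Tensoring by $s$ preserves $\bar{\partial}\ast(s\alpha) = s\,\bar{\partial}\ast\alpha = 0$ and gives $\|s\alpha\|_{H\otimes h} \leq \sup_Y|s|_h \cdot \|\alpha\|_H$. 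The key computation is that $\lim_\varepsilon \langle i[\Theta_{E\otimes L, H_\varepsilon\otimes h_\varepsilon}, \Lambda](s\alpha), s\alpha\rangle = 0$, which follows from $\Theta_{E\otimes L} = \Theta_E\otimes\mathrm{Id}_L + \mathrm{Id}_E\otimes\Theta_L$ combined with the curvature inequality $i\Theta_{L, h_\varepsilon}\otimes\mathrm{Id}_E \leq \delta^{-1} i\Theta_{E, H_\varepsilon}$ arising from hypothesis (c). Proposition \ref{p41}(1) applied once more then certifies $s\alpha$ is harmonic, and injectivity is clear since $s \not\equiv 0$. The main obstacle is precisely this step: hypothesis (c) is phrased as strong Nakano positivity of the $\mathbb{Q}$-twisted bundle $E\langle -\delta L\rangle$ rather than directly as a curvature inequality on smooth approximations, so I must verify that the pointwise estimate $i\Theta_{L,h_\varepsilon}\otimes\mathrm{Id}_E \leq \delta^{-1}i\Theta_{E,H_\varepsilon}$ survives regularisation uniformly in $\varepsilon$, and that curvature pairings behave well under tensoring by $s$ (whose norm $|s|_{h_\varepsilon}^2$ is controlled only in $L^\infty$). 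A careful choice of a simultaneously compatible regularising pair $(H_\varepsilon, h_\varepsilon)$ on $(E\otimes L, H\otimes h)$, combined with Lebesgue dominated convergence as $\varepsilon \to 0$, should resolve this.
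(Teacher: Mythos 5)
Your proposal follows essentially the same route as the paper's own proof: the \v{C}ech--harmonic double complex with $d=\delta+\bar{\partial}$ collapsing for Part I, sheafification of the $S^{q},L^{q}$ operators from Proposition \ref{p12} for Part II, the fibrewise Bochner/curvature computation via Proposition \ref{p41}(1) for Part III, and Part IV as a formal consequence of Part I. The one point you flag as an obstacle in Part III --- the uniform-in-$\varepsilon$ curvature inequality coming from hypothesis (c) --- is treated in the paper at the same level of care you describe, so there is no substantive divergence.
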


\section{Vanishing theorem}
We should prove Theorem \ref{t12} in the end.
\begin{theorem}[=Theorem \ref{t12}]\label{t61}
Let $f:Y\rightarrow Z$ be a fibration between two compact K\"{a}hler manifolds. 

{\rm \textbf{I Nadel-type vanishing Theorem}}. Let $(L,h)$ be an $f$-big line bundle, and let $(E,H)$ be a vector bundle that is strongly positive in the sense of Nakano. Assume that $H|_{Y_{z}}$ is well-defined for every $z$. Then
\[
R^{q}f_{\ast}(K_{Y}\otimes(E\otimes L)(H\otimes h))=0\textrm{ for every }q>0.
\]

{\rm \textbf{II Nakano-type vanishing Theorem}}. Assume that $(E,H)$ is strongly strictly positive in the sense of Nakano. Then
\[
H^{q}(Y,K_{Y}\otimes S^{l}E(S^{l}H))=0\textrm{ for every }l,q>0.
\] 

{\rm \textbf{III Griffiths-type vanishing Theorem}}. Assume that $(E,H)$ is strictly positive in the sense of Griffiths. Then
\[
H^{q}(Y,K_{Y}\otimes S^{l}(E\otimes\det E)(S^{l}(H\otimes\det H)))=0\textrm{ for every }l,q>0.
\] 
\end{theorem}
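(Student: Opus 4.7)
I would prove the three parts in the order II, III, I, since Part III reduces to Part II by a singular Demailly--Skoda cancellation, and Part I combines Part II with a Serre--Leray argument.

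\textbf{Part II (Nakano).} By the singular Hodge isomorphism (Proposition \ref{p12}) applied to $(S^{l}E, S^{l}H)$, the group $H^{q}(Y, K_{Y}\otimes S^{l}E(S^{l}H))$ identifies with the harmonic space $\mathcal{H}^{n,q}(Y, S^{l}E(S^{l}H),\Box_{0})$. For any class $\alpha$ in this space, take harmonic representatives $\alpha_{l,\varepsilon}$ on $Y'$ as in Definition \ref{d41}. The strict form of Proposition \ref{p11}(e), recorded at the end of its proof, supplies positive constants $C_{l}$ such that
\[
i\Theta_{S^{l}E,H^{l}_{\varepsilon}}\geqslant C_{l}(1-\varepsilon')\,\omega\otimes\mathrm{Id}_{S^{l}E}
\]
in the Nakano sense over every $Y''\subset\subset Y'$. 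Feeding this into the Bochner identity (Proposition \ref{p25}) with $\eta=1$ yields
\[
0=\|\partial^{\ast}_{H^{l}_{\varepsilon}}\alpha_{l,\varepsilon}\|^{2}+\|\partial_{H^{l}_{\varepsilon}}\alpha_{l,\varepsilon}\|^{2}+\langle i[\Theta_{S^{l}E,H^{l}_{\varepsilon}},\Lambda]\alpha_{l,\varepsilon},\alpha_{l,\varepsilon}\rangle,
\]
whose last term dominates $qC_{l}(1-\varepsilon')\|\alpha_{l,\varepsilon}\|^{2}$. Since $q\geqslant 1$, this forces $\alpha_{l,\varepsilon}\equiv 0$ on $Y''$; exhausting $Y'$ by such $Y''$ and passing to the $L^{2}$-limit gives $\alpha=0$.

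\textbf{Part III (Griffiths).} Let $\phi$ denote the metric on $\mathcal{O}_{E}(1)$ corresponding to $H$. The metric on $\mathcal{O}_{E\otimes\det E}(1)=\mathcal{O}_{E}(1)\otimes\pi^{\ast}\det E$ induced by $H\otimes\det H$ has weight $\phi+\pi^{\ast}\log\det H$. Since $\det(E\otimes\det E)=(\det E)^{r+2}$, the $\tfrac{1}{r+2}$-Q-twist in Definition \ref{d24}(3) exactly cancels the $\pi^{\ast}\log\det H$ contribution: strong strict Nakano positivity of $(E\otimes\det E, H\otimes\det H)$ reduces to strict positivity of $i\partial\bar{\partial}\phi$, which is precisely the strict Griffiths hypothesis on $(E,H)$. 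Part II applied to $E\otimes\det E$ then yields the desired vanishing.

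\textbf{Part I (Nadel) and main obstacle.} Fix a smooth Hermitian metric $a$ on a very ample line bundle $A$ on $Z$ with $ic_{1}(A,a)>0$. Using the $f$-bigness of $(L,h)$ together with a relative Kodaira lemma (cf. \cite{Fuj13}), one constructs, for $m\gg 0$, a singular metric on $L\otimes f^{\ast}A^{m}$ of strictly positive curvature current on $Y$, with the same singular set as $h$. The cancellation computation of Part III (with the extra K\"ahler contribution from $A$) then upgrades $(E\otimes L\otimes f^{\ast}A^{m}, H\otimes h\otimes f^{\ast}a^{m})$ to a strongly strictly Nakano positive bundle. The argument of Part II, which works for any such bundle, therefore delivers
\[
H^{q}(Y, K_{Y}\otimes(E\otimes L\otimes f^{\ast}A^{m})(H\otimes h))=0,\qquad q>0.
\]
Leray degeneration (Theorem \ref{t11}, I) and the projection formula identify the left-hand side with $\bigoplus_{p+q'=q}H^{p}(Z, R^{q'}f_{\ast}(K_{Y}\otimes(E\otimes L)(H\otimes h))\otimes A^{m})$. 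For $m\gg 0$, Serre vanishing kills the $p\geqslant 1$ terms, leaving $H^{0}(Z, R^{q}f_{\ast}(K_{Y}\otimes(E\otimes L)(H\otimes h))\otimes A^{m})=0$; global generation by $A^{m}$ then forces $R^{q}f_{\ast}(K_{Y}\otimes(E\otimes L)(H\otimes h))=0$. The main obstacle lies precisely in the first step: converting the $f$-bigness of $(L,h)$, after twisting by $f^{\ast}A^{m}$, into a strictly positive closed current on $Y$ whose singular locus stays compatible with the regularization scheme of Proposition \ref{p11} and with the requirement that $(H\otimes h)|_{Y_{z}}$ be well-defined for every $z$.
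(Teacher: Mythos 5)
Your Parts II and III follow the paper essentially verbatim: II is the same Bochner argument using the strict form of Proposition \ref{p11}(e) together with the singular Hodge isomorphism of Proposition \ref{p12}, and III is the same reduction of strict Griffiths positivity of $(E,H)$ to strong strict Nakano positivity of $(E\otimes\det E,H\otimes\det H)$ (the paper cites Theorem 1.3 of its reference [Wu20b] where you spell out the $\det(E\otimes\det E)=(\det E)^{r+2}$ cancellation explicitly; your computation is correct and matches Definition \ref{d24}). Part I, however, is where you genuinely diverge. The paper never produces a strictly positive metric on $L\otimes f^{\ast}A^{m}$ over all of $Y$: it constructs the fibrewise metric $h=h_{0}^{\delta}\otimes h_{z}^{1-\delta}$, kills $R^{q}f_{\ast}(K_{Y}\otimes(E\otimes L)(H\otimes h)\otimes A)$ by Serre-type vanishing, and then concludes by the \emph{injectivity theorem} (Theorem \ref{t11}, III) applied to a section of $f^{\ast}A$ --- so the vanishing of the twisted direct image is transported back to the untwisted one through an injection, and no global strict positivity on $Y$ is ever needed. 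You instead upgrade the whole bundle $E\otimes L\otimes f^{\ast}A^{m}$ to a strongly strictly Nakano positive one, apply the absolute vanishing of Part II on $Y$, and descend via Leray degeneration, the projection formula, Serre vanishing on $Z$ and global generation of $\mathcal{F}\otimes A^{m}$. Your route is self-contained modulo Part II, but it pays for this by requiring exactly the step you flag as the main obstacle: $f^{\ast}A^{m}$ degenerates along fibres and $h$ is only strictly positive along fibres, so strict positivity of the sum on all of $Y$ needs a compactness argument (possible since $i\Theta_{L,h}\geqslant\gamma$ for a smooth global $\gamma$ with $\gamma|_{Y_{z}}>0$, but not automatic), and the global generation step quietly assumes $Z$ projective. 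The paper's route buys freedom from global strict positivity at the cost of invoking the harder injectivity theorem; both routes share, and neither fully justifies, the measurable/coherent dependence of the fibrewise metrics $h_{z}$ on $z$ in the initial gluing.
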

Recall that $L$ is $f$-big if the Iitaka dimension \cite{Fuj13} 
\[
\kappa(Y_{z},L)=\dim Y-\dim Z
\]  
for every $z\in Z$.  
\begin{proof}
\textbf{I}. We claim that if $L$ is $f$-big, there exists a singular metric $h$ on $L$ such that $h|_{Y_{z}}$ is well-defined and $i\Theta_{L,h}|_{Y_{z}}$ is strictly positive for every $z\in Z$. 

Indeed, by definition for every $z\in Z$ there exists a singular metric $h_{z}$ on $L|_{Y_{z}}$ such that $i\Theta_{L,h_{z}}$ is strictly positive \cite{Dem12}. Now take a smooth metric $h_{0}$ on $L$ and define a singular metric on $L$ as follows:
\[
h:=h^{\delta}_{0}\otimes h^{1-\delta}_{z}\textrm{ for }y\in Y_{z}.
\]
It is easy to verify that $h$ satisfies the desired property when $\delta$ is small enough. In particular, $i\Theta_{L,h}\geqslant\gamma$ for some real smooth $(1,1)$-form on $Y$.																																
Let $A$ be a sufficiently ample line bundle over $Z$. Then
\[
H^{q}(Y_{z},K_{Y}\otimes(E\otimes L)(H\otimes h)\otimes A)=0\textrm{ for }q>0
\] 
by Serre's asymptotic vanishing theorem \cite{Har77}, hence
\[
R^{q}f_{\ast}(K_{Y}\otimes(E\otimes L)(H\otimes h)\otimes A)=0\textrm{ for }q>0.
\]

On the other hand, it is easy to verify that $A$ satisfies the conditions in Theorem \ref{t11}, III, therefore
\[
R^{q}f_{\ast}(K_{Y}\otimes(E\otimes L)(H\otimes h))\rightarrow R^{q}f_{\ast}(K_{Y}\otimes(E\otimes L)(H\otimes h)\otimes A)
\]
is injective. As a result, $R^{q}f_{\ast}(K_{Y}\otimes(E\otimes L)(H\otimes h))=0$ for $q>0$. 

\textbf{II}. By Proposition \ref{p12}, it is enough to prove that
\[
\mathcal{H}^{n,q}(Y,S^{l}E(S^{l}H),\Box_{0})=0.
\]
Since $(E,H)$ is strongly strictly positive in the sense of Nakano, 
\[
(S^{l}E,H^{l}_{\varepsilon})
\] 
is strictly positive in the sense of Nakano on $Y^{\prime}$ for every $\varepsilon$ by Proposition \ref{p11}, (e). Indeed, in the proof of (e) it is easy to see that if the positivity of $(E,H)$ is strict, the regularising sequence $\{H^{l}_{\varepsilon}\}$ will satisfies that

(e') for every relatively compact subset $Y^{\prime\prime}\subset\subset Y^{\prime}$ and every $l$,
\[
  i\Theta_{S^{l}E,H^{l}_{\varepsilon}}\geqslant C_{l}(1-\varepsilon)\omega\otimes\mathrm{Id}_{S^{l}E}
  \]
  over $Y^{\prime\prime}$ in the sense of Nakano. 

Now we apply Bochner's formula (\ref{e24}) to $(S^{l}E,H^{l}_{\varepsilon})$ on $Y^{\prime}$. Notice that $Y-Y^{\prime}$ is a closed subvariety hence has real codimension $\geqslant2$. In particular, the integral equality in (\ref{e24}) holds here. For any 
\[
\alpha\in\mathcal{H}^{n,q}(Y,S^{l}E(S^{l}H),\Box_{0}),
\] 
we have
\[
0=\lim(\|\partial^{\ast}_{H^{l}_{\varepsilon}}\alpha\|^{2}_{H^{l}_{\varepsilon}}+<ie(\Theta_{S^{l}E,H^{l}_{\varepsilon}})\Lambda\alpha,\alpha>_{H^{l}_{\varepsilon}}).
\]
Since $(S^{l}E,H^{l}_{\varepsilon})$ is strictly positive in the sense of Nakano, the Hermitian form $<ie(\Theta_{S^{l}E,H^{l}_{\varepsilon}})\Lambda\cdot,\cdot>_{H^{l}_{\varepsilon}}$ is positive-definite. Thus, we must have $\alpha=0$. The proof is complete.

\textbf{III}. Since $(E,H)$ is strictly positive in the sense of Griffiths, 
\[
(E\otimes\det E,H\otimes\det H)
\] 
is strongly strictly positive in the sense of Nakano due to \cite{Wu20b}, Theorem 1.3. Then we apply II on $(E\otimes\det E,H\otimes\det H)$ to obtain the desired conclusion.
\end{proof}

\address{

\small Current address: School of Mathematical Sciences, Fudan University, Shanghai 200433, People's Republic of China.

\small E-mail address: jingcaowu13@fudan.edu.cn
}

\end{document}